\newtheorem{theorem}{Theorem}[section]
\newtheorem{lemma}{Lemma}[section]
\newtheorem{prop}{Proposition}[section]
\newtheorem{rem}{Remark}[section]
\newcommand{\R}{\mathds{R}}
\newcommand{\E}{\mathds{E}}
\newcommand{\PP}{\mathds{P}}
\newcommand{\sS}{\mathds{S}}
\newcommand{\dint}{\mathrm{d}}
\begin{document} 

\title{Thin-shell concentration for zero cells of stationary Poisson mosaics}


\author{Eliza O'Reilly}
\address{University of Texas at Austin, Department of Mathematics, RLM 8.100,
2515 Speedway Stop C1200
Austin, Texas 78712-1202}
\curraddr{}
\email{eoreilly@math.utexas.edu}
\thanks{The author was supported by a grant of the Simons Foundation (\#197982 to UT Austin) and by the National Science Foundation Graduate Research Fellowship under Grant No. DGE-1110007.}

\subjclass[2010]{60D05, 52A22}

\keywords{Poisson-Voronoi mosaic, Poisson hyperplane mosaic, thin-shell estimate, log-concave random vector}

\date{}

\dedicatory{}

\begin{abstract}

We study the concentration of the norm of a random vector $Y$ uniformly sampled in the centered zero cell of two types of stationary and isotropic random mosaics in $\R^n$ for large dimensions $n$. For a stationary and isotropic Poisson-Voronoi mosaic, $Y$ has a radial and log-concave distribution, implying that ${|Y|}/{\mathds{E}(|Y|^2)^{\frac{1}{2}}}$ approaches one for large $n$. Assuming the cell intensity of the random mosaic scales like $e^{n \rho_n}$, where $\lim_{n \to \infty} \rho_n = \rho$, $|Y|$ is on the order of $\sqrt{n}$ for large $n$. For the Poisson-Voronoi mosaic, we show that $|Y|/\sqrt{n}$ concentrates to $e^{-\rho}(2\pi e)^{-\frac{1}{2}}$ as $n$ increases, and for a stationary and isotropic Poisson hyperplane mosaic, we show there is a range $(R_{\ell}, R_u)$ such that ${|Y|}/{\sqrt{n}}$ will be within this range with high probability for large $n$. The rates of convergence are also computed in both cases.

\end{abstract}

\maketitle

\section{Introduction}

Random mosaics, also called random tessellations, have long been studied in stochastic geometry and give rise to interesting classes of random convex polytopes. 
Some well-known classes of random mosaics are built from Poisson point processes, either in $\R^n$ or in the space of hyperplanes in $\R^n$. Statistics of the cells of these random mosaics have been well-studied, particularly in dimensions $n=2$ and $n=3$. See \cite{newSG} and \cite[Chapter 10]{weil} for more background and further references. Particular attention has been paid to the zero cell and the typical cell, two random convex polytopes induced by the random mosaic. The zero cell is the cell of the mosaic containing the origin, and the distribution of the typical cell is obtained by averaging over all cells in a large bounded subset and then increasing this subset to the entire space. 

Recently, there has been more interest in high dimensional random tessellations, partially due to applications in signal processing \cite{PV1} and information theory \cite{venkat}. For these applications, it is important to understand the asymptotic geometric properties of the convex polytopes induced by random tessellations, in order to decode and reconstruct high dimensional signals with small error.
Some recent work has considered high dimensional Poisson mosaics in particular, focusing on the volume and shape of the zero cell and typical cell as dimension $n$ tends to infinity \cite{Voronoi, HorrmanZero, HugPoly}.  
For example, in \cite{Voronoi}, it is proved that the volume of the intersection of the typical cell of a Poisson-Voronoi mosaic with intensity $\lambda$ and a co-centered ball of volume $u$ tends to $\lambda^{-1}(1-e^{-\lambda u})$ as the space dimension tends to infinity. 

In this paper, we aim to better understand the nature of the zero cell of stationary Poisson mosaics in high dimensions by considering a phenomenon in asymptotic convex geometry called thin-shell concentration. 
It is well-known (see e.g. \cite{bobkov}) that a radial random vector $Y$ in $\R^n$ with density $f(x) := g(|x|)$, where $g: \R \rightarrow \R$ is log-concave, satisfies the following: For some absolute constant $C > 0$, for all $t \geq 0$,
\begin{align}\label{e:thinshell}
\mathds{E}\left(\frac{|Y|}{(\mathds{E}|Y|^2)^{\frac{1}{2}}} - 1 \right)^2 \leq \frac{C}{n}.
\end{align}
This inequality implies the norm of $Y$ will be concentrated in a ``thin-shell" around its expectation for large dimension $n$, and is called a thin-shell estimate. 
A major open problem in asymptotic convex geometry is the thin-shell conjecture, which claims that \eqref{e:thinshell} holds for all log-concave random vectors $Y$ in $\R^n$ normalized so that $\mathds{E}(Y) = 0$ and $\mathds{E}(Y_iY_j) = \delta_{ij}$, for $i, j = 1, \ldots ,n$. 
The best currently known thin-shell estimate follows from the following deviation estimate: for absolute constants $C, c >0$,
\[\mathds{P}(| |Y | - \sqrt{n} | \geq t \sqrt{n}) \leq Ce^{-c\min{\{t^3, t\}}\sqrt{n}} \, \, \text{ for all } t \geq 0,\]
and was proved by Gu\'edon and Milman in \cite{Milman}. 
We refer to the monograph \cite{CVBook} for more on thin-shell estimates and log-concave random vectors. 

In the following, we study to what extent the phenomenon of thin-shell concentration occurs for the random vector that, conditioned on the random mosaic, is uniformly distributed in the centered zero cell. By ``centered", we mean that an appropriately chosen center of the cell is located at the origin, for instance, the center of the largest ball contained within the cell. If this random vector is concentrated around its mean in high dimensions, then most of the volume of the zero cell of the random mosaic will be contained within a narrow annulus around its center. 


One motivation for the study of the norm of this random vector is data compression. Random mosaics can be used to compress data in $\R^n$ such that all data contained in the same cell of the tessellation will have the same encoding. This is the case, for instance, in one-bit compressed sensing using hyperplane tessellations, see \cite{PV1} and \cite{PV2}. Reconstructing the original data with small error requires that all data within the same cell of the tessellation are close together. The volume of the cell is not a useful metric in this case, since a very thin cell could have small volume and also contain signals that lie very far apart. The norm of the random vector studied in this paper is a more useful metric to ensure the mass of the cell does not lie far away from the center.

Our study of the random vector chosen uniformly from the centered zero cell begins with the observation is that its distribution is shown to depend on the typical cell, as shown in Lemma \ref{X_density}. This is due to the fact that the distribution of the zero cell has a Radon-Nikodym derivative with respect to the distribution of the typical cell. We then restrict to studying two types of stationary random mosaics, a stationary Poisson-Voronoi mosaic and a stationary and isotropic Poisson hyperplane mosaic, since in both cases there exists an explicit representation for the distribution of the typical cell that allows for computations. Both of these random mosaics are isotropic, that is, their distribution is invariant under rotations about the origin. This implies that the random vector of interest will be radially symmetric. In the Poisson-Voronoi case, we show that this random vector is also log-concave, and thus satisfies the thin-shell estimate \eqref{e:thinshell}. We also prove strong deviation estimates by direct computation. 

The main complementary results can be stated as follows. For each $n$, let $X_n$ be a stationary random mosaic in $\R^n$ where the intensity of cell centroids is $e^{n\rho_n}$ and assume $\lim_{n \to \infty} \rho_n = \rho \in \R$. 
Let $Y_n$ denote a random vector in $\R^n$ such that, conditionally on $X_n$, $Y_n$ is uniformly distributed in the centered zero cell of $X_n$. For the Poisson-Voronoi mosaic, we show that ${|Y_n|}/{\sqrt{n}}$ concentrates to $e^{-\rho}(2\pi e)^{-\frac{1}{2}}$ as the dimension $n$ increases. 
Exponential rates of convergence are also computed, as shown in Theorem \ref{t:vor_thresh}.
In the case of the zero cell of a Poisson hyperplane tessellation, we show there exists an interval $(R_{\ell}, R_u)$ such that ${|Y_n|}/{\sqrt{n}}$ will be contained in this interval with high probability for large dimension $n$. Rates of convergence are also computed in this case as shown in Theorem \ref{t:hyp_thresh}.


\section{Preliminaries and notation}


Let $\mathcal{F}$ denote the set of closed sets in $\R^n$ and define $\mathcal{F}' := \mathcal{F} \backslash \emptyset$. Similarly, define $\mathcal{C}$, $\mathcal{K}$, $\mathcal{C}'$, and $\mathcal{K}'$ as the set of compact sets in $\R^n$, the set of compact and convex sets of $\R^n$, and their non-empty counterparts, respectively. Random sets will be studied with hitting probabilities using the following notation. For $A, B \subset \R^n$, define
\[\mathcal{F}_A:= \{ F \in \mathcal{F} : F \cap A \neq \emptyset\}, \qquad \mathcal{F}^B := \{F \in \mathcal{F} : F \cap B  = \emptyset\},\]
and
\[ \mathcal{F}_A^B := \mathcal{F}_A \cap \mathcal{F}^B.\] 

The open ball in $\R^n$ of radius $R$ centered at the origin is denoted by $B_n(R)$ and the unit sphere by $\mathds{S}^{n-1}$. The measure $\sigma_{n-1}$ will denote the uniform probability measure on $\mathds{S}^{n-1}$, i.e., the normalized spherical Lebesgue measure.
Also, let $\kappa_n$ denote the volume of the unit ball $B_n(1)$, and $\omega_n$ the surface area of the unit sphere $\mathds{S}^{n-1}$. Note that $\omega_n = n\kappa_n$ and $\kappa_n = \frac{\pi^{n/2}}{\Gamma(n/2 +1 )}$, where $\Gamma(x) = \int_0^{\infty} e^{-t} t^{x-1}\dint t$ is the gamma function. Stirling's formula implies the following asymptotics that will be used throughout the paper: as $n \to \infty$,
\begin{equation}\label{e:vball}
\kappa_n \sim \frac{1}{\sqrt{\pi n}}\left(\frac{2\pi e}{n}\right)^{n/2}, \qquad \kappa_n^{\frac{1}{n}} \sim \sqrt{\frac{2\pi e}{n}}, \qquad \text{and} \qquad \frac{n \kappa_n}{\kappa_{n-1}} \sim \sqrt{2\pi n} .
\end{equation}

A mosaic is defined to be a collection of convex polytopes in $\R^n$ such that the union is the entire space and no two polytopes in the collection share interior points. Let $\mathds{M}$ denote the set of all face-to-face mosaics (see \cite[Section 10.1]{weil}). Then, a random mosaic in $\R^n$ is defined to be a particle process in $\R^n$, that is, a point process in $\mathcal{C}'$, such that $X \in \mathds{M}$ almost surely. The polytopes contained in the mosaic will be referred to as the cells of the mosaic.

The intensity measure of a stationary particle process $X$ is defined as $\Theta(\cdot) := \E[X(\cdot)]$ and can be decomposed in the following way.  Let $c: \mathcal{C}^{'} \rightarrow \R^n$ be a {\em center function}, a measurable map which is compatible with translations, i.e. $c(C + x) = c(C) + x$ for all $x \in \R^n$. Define the grain space
\[\mathcal{C}_0 := \{ C \in \mathcal{C}' : c(C) = 0\},\]
and the homeomorphism (see \cite[Section 4.1]{weil} for more details)
\[\Phi: \R^n \times \mathcal{C}_0 \rightarrow \mathcal{C}' ; \qquad (x, C) \rightarrow x + C.\]

\begin{theorem} (Theorem 4.1.1 in \cite{weil}) Let $X$ be a stationary particle process in $\R^n$ with intensity measure $\Theta \neq 0$. Then there exists a number $\lambda \in (0, \infty)$ and a probability measure $\mathds{Q}$ on $\mathcal{C}_0$ such that 
\[\Theta = \lambda \Phi ( \nu \otimes \mathds{Q}).\]
\end{theorem}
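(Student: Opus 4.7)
The plan is a standard factorization argument via the center-based homeomorphism $\Phi$, exploiting stationarity to produce a translation-invariant measure on $\R^n$, which must then be a scalar multiple of Lebesgue measure. First, I would pull back the intensity measure through $\Phi$, defining $\tilde\Theta := (\Phi^{-1})_*\Theta$ on $\R^n \times \mathcal{C}_0$. Because the center function satisfies $c(K+y) = c(K)+y$, the translation $K \mapsto K+y$ on $\mathcal{C}'$ corresponds under $\Phi^{-1}$ to the shift $(x, C) \mapsto (x+y, C)$ of the first coordinate only; stationarity of $X$ therefore makes $\tilde\Theta$ invariant under the $\R^n$-action on the first factor.

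Next, for each fixed Borel set $B \subset \mathcal{C}_0$, the section $A \mapsto \tilde\Theta(A \times B)$ is a translation-invariant Borel measure on $\R^n$. Using the local finiteness of the intensity measure of a particle process --- specifically, $\Theta(\mathcal{F}_K) < \infty$ for every compact $K \subset \R^n$ --- one shows this section measure is locally finite, at least when $B$ is restricted to sets of the form $\{C \in \mathcal{C}_0 : C \subset B_n(r)\}$. Uniqueness of translation-invariant locally finite Borel measures on $\R^n$ up to scalars then forces
\[\tilde\Theta(A \times B) = \mu(B)\, \nu(A)\]
for some $\mu(B) \in [0, \infty]$. A monotone-class argument promotes $B \mapsto \mu(B)$ to a bona fide Borel measure on $\mathcal{C}_0$, yielding $\tilde\Theta = \nu \otimes \mu$, and pushing forward through $\Phi$ recovers the claimed identity once we set $\lambda := \mu(\mathcal{C}_0)$ and $\mathds{Q} := \mu/\lambda$.

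It remains to verify $\lambda \in (0, \infty)$. Non-triviality $\lambda > 0$ follows directly from $\Theta \neq 0$. The key point is finiteness: for any bounded Borel $A \subset \R^n$ with $\nu(A) > 0$, the factorization gives $\nu(A)\, \mu(\mathcal{C}_0) = \Theta(\{K \in \mathcal{C}' : c(K) \in A\})$, and the right-hand side is finite by the local finiteness property characteristic of a particle process (the expected number of cells with centers in any bounded region is finite). The main technical obstacle is setting up the measurability carefully: one needs $\mathcal{C}_0$ equipped with a Polish topology (for instance the Fell or Hausdorff topology) so that $\Phi$ is genuinely a homeomorphism of measurable spaces and the monotone-class extension of $\mu$ from the $\pi$-system of bounded grains to all Borel subsets of $\mathcal{C}_0$ is rigorous.
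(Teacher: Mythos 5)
The paper does not prove this result; it simply cites Theorem 4.1.1 of Schneider and Weil. Your sketch does follow the standard argument of that reference (pull back through $\Phi$, exploit stationarity to get translation-invariance of the first marginal, invoke uniqueness of Haar measure on $\R^n$, factor, and normalize), so you are on the right track methodologically.

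However, there is a genuine gap in your treatment of the finiteness of $\lambda$. You write that $\Theta\bigl(\{K : c(K) \in A\}\bigr) < \infty$ for bounded $A$ ``by the local finiteness property characteristic of a particle process,'' and gloss this as ``the expected number of cells with centers in any bounded region is finite.'' But local finiteness of a particle process means $\Theta(\mathcal{F}_K) < \infty$ for compact $K$, i.e.\ the expected number of particles \emph{hitting} a compact set is finite. The set $\{K : c(K) \in A\}$ is not contained in any $\mathcal{F}_K$ a priori, because a particle with its center in $A$ can be arbitrarily large, and for a general translation-compatible center function $c$ it need not even be true that $c(K) \in K$. What makes the argument go through is the extra geometric input that for the center functions actually used (circumcenter, incenter, Steiner point, or the Voronoi seed), one does have $c(K) \in K$; then $\{K : c(K) \in A\} \subset \mathcal{F}_{\overline{A}}$ and local finiteness closes the gap. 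For a genuinely arbitrary center function one must additionally argue that the intensity $\lambda$ does not depend on the choice of $c$ (which follows by comparing $c$ to the circumcenter via a translation-invariance argument) and hence is finite. This is precisely the delicate step in the reference, and it should not be swept under ``the local finiteness property characteristic of a particle process.''

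A minor remark: the ``monotone-class argument'' to promote $B \mapsto \mu(B)$ to a measure is unnecessary machinery. Once you know $\tilde\Theta(A\times B) = \mu(B)\nu(A)$ with the constant $\mu(B)$ independent of $A$, you may simply fix a bounded Borel $A_0$ with $0 < \nu(A_0) < \infty$ and set $\mu(B) := \tilde\Theta(A_0\times B)/\nu(A_0)$, which is manifestly countably additive because $\tilde\Theta$ is.
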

The number $\lambda$ is called the intensity of the particle process and it will also be referred to as the cell intensity. $\mathds{Q}$ is called the grain distribution. The point process of centers of the cells of a stationary mosaic is a stationary point process in $\R^n$ with intensity $\lambda$. For a stationary random mosaic $X$, a random set with distribution $\mathds{Q}$ is called the typical cell of $X$.
It is known that that the expected volume of the typical cell is given by the reciprocal of the cell intensity, i.e.,
\begin{align}\label{e:EVZ}
\mathds{E}[V(Z)] = \int V(K) \mathds{Q}(K) = \frac{1}{\lambda}.
\end{align}

The zero cell of a random mosaic, denoted by $Z_0$, is defined to be the cell the origin is contained in. Another interpretation of the typical cell is the distribution of the zero cell under the Palm distribution $\PP^0$ of the cell centers (see \cite[Chapter 3]{weil}). That is, for all $\mathcal{A} \in \mathcal{B}(\mathcal{K}')$,
\begin{align}\label{e:typicalPalm}
\PP(Z \in \mathcal{A}) = \PP^0(Z_0 \in \mathcal{A}),
\end{align}
where $\PP^0$ is the distribution conditioned on a cell center being located at the origin.

The following result shows an important relationship between the distribution of the zero cell and the typical cell of a stationary random mosaic. In particular, that the distribution of $Z_0 - c(Z_0)$ has a Radon-Nikodym derivative with respect to the distribution of $Z$ given by $V(\cdot)/\mathds{E}[V(Z)]$.

\begin{theorem}\label{typical_zero}(Theorem 10.4.1 in \cite{weil})
Let $X$ be a stationary random mosaic in $\R^n$. Denote its typical cell by $Z$ and zero cell by $Z_0$. For any nonnegative measurable and translation invariant function $f: \mathcal{K}' \rightarrow \R$,
\begin{align*}
\mathds{E}[f(Z_0)] = \frac{1}{\mathds{E}[V(Z)]} \mathds{E}[f(Z) V(Z)].
\end{align*}
\end{theorem}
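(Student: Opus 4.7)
The plan is to reduce the statement to a direct Campbell-type computation using the intensity-measure decomposition $\Theta = \lambda\,\Phi(\nu \otimes \mathds{Q})$ for $X$. Since $X$ is a stationary random mosaic whose cells tile $\R^n$ face-to-face, the origin almost surely lies in the interior of a unique cell, and therefore
\[
f(Z_0) \;=\; \sum_{C \in X} f(C)\,\mathds{1}\{0 \in C\}
\]
holds almost surely. The overlap of any two cells has Lebesgue measure zero, and the stationarity of the distribution of $X$ rules out the origin lying on a lower-dimensional face.

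Taking expectations converts the right-hand side into an integral against the intensity measure, namely $\int_{\mathcal{C}'} f(C)\,\mathds{1}\{0 \in C\}\,\Theta(\dint C)$. Substituting the decomposition rewrites this integral as
\[
\lambda \int_{\mathcal{C}_0} \int_{\R^n} f(x+K)\,\mathds{1}\{0 \in x+K\}\,\dint x\,\mathds{Q}(\dint K).
\]
By the translation invariance of $f$ one has $f(x+K) = f(K)$, and the indicator becomes $\mathds{1}\{-x \in K\}$; carrying out the inner Lebesgue integral then produces exactly $V(K)$. What remains is $\mathds{E}[f(Z_0)] = \lambda\,\mathds{E}[f(Z)\,V(Z)]$, and combining with $\lambda = 1/\mathds{E}[V(Z)]$ from \eqref{e:EVZ} delivers the stated identity.

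The only step that requires any real care is the opening identification of $f(Z_0)$ with the sum over all cells, which rests on the elementary measure-theoretic observation above. After that, the argument is bookkeeping around the factorization of $\Theta$: the translation invariance of $f$ is exactly what allows the $x$-integration to be decoupled from the $K$-integration and to collapse into a volume factor. Indeed, specializing the same computation to $f \equiv 1$ recovers \eqref{e:EVZ} itself, revealing that the two facts are two aspects of a single Campbell-type identity.
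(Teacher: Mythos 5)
The paper does not prove this statement; it cites it directly as Theorem~10.4.1 of \cite{weil}, so there is no in-paper proof to compare against. Your argument is correct and is the standard Campbell-formula derivation of this identity, and it is self-contained modulo the facts already recorded in the paper.

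A few remarks on the details. The opening identification $f(Z_0) = \sum_{C\in X} f(C)\,\mathds{1}\{0\in C\}$ is the only place that needs care, and your justification is right: the union of cell boundaries is a random closed set of Lebesgue measure zero, and by stationarity the probability that the origin lies in it equals the (constant) probability that any fixed point does, which Fubini forces to be zero; hence the origin a.s.\ lies in the interior of exactly one cell and the sum has a single nonzero term. From there, the Campbell theorem for the particle process turns the expectation of the sum into $\int_{\mathcal{C}'} f(C)\,\mathds{1}\{0\in C\}\,\Theta(\dint C)$, the factorization $\Theta = \lambda\,\Phi(\nu\otimes\mathds{Q})$ rewrites this as $\lambda\int_{\mathcal{C}_0}\int_{\R^n} f(x+K)\,\mathds{1}\{-x\in K\}\,\dint x\,\mathds{Q}(\dint K)$, translation invariance of $f$ pulls $f(K)$ out of the inner integral, and the inner integral collapses to $V(K)$. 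Dividing by $\lambda = 1/\mathds{E}[V(Z)]$, which as you observe is itself the $f\equiv 1$ case of the same computation, gives the stated identity. This is the same Campbell-type argument used in \cite{weil}; you have reconstructed it faithfully.
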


An application of the above result gives the density of a vector uniformly sampled in the centered zero cell of a random mosaic.
\begin{lemma}\label{X_density}
Let $X$ be a stationary random mosaic in $\R^n$ with zero cell $Z_0$ and typical cell $Z$ with respect to a center function $c: \mathcal{C}^{'} \rightarrow \R^n$ as previously defined. Let $Y$ be a random vector in $\R^n$ such that conditioned on $X$,
\begin{align*}
Y \sim \mathrm{Uniform}(Z_0 - c(Z_0)). 
\end{align*}
Then, for all nonnegative measurable functions $g : \R^n \to \R$,
\begin{align*}
\mathds{E}[g(Y)] = \int_{\R^n} g(x) \frac{\mathds{P}(x \in Z)}{\mathds{E}[V(Z)]} \dint x,
\end{align*}
i.e., $Y$ has a density given by $f_Y(x) =  \mathds{P}(x \in Z)/\mathds{E}[V(Z)]$.
\end{lemma}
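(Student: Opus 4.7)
The plan is to reduce the claim to a direct application of Theorem \ref{typical_zero} with a carefully chosen translation-invariant test function. Conditionally on $X$, the vector $Y$ is uniform on $Z_0 - c(Z_0)$, so
\[
\E[g(Y)\mid X] \;=\; \frac{1}{V(Z_0)}\int_{\R^n} g(x)\,\mathds{1}\{x \in Z_0 - c(Z_0)\}\,\dint x,
\]
using that $V(Z_0 - c(Z_0)) = V(Z_0)$ by translation invariance of Lebesgue measure.

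Next, I would define the functional $h : \mathcal{K}' \to \R$ by
\[
h(K) \;:=\; \frac{1}{V(K)}\int_{\R^n} g(x)\,\mathds{1}\{x \in K - c(K)\}\,\dint x .
\]
The key observation is that $h$ is translation invariant: for any $y \in \R^n$, the equivariance $c(K+y) = c(K) + y$ gives $(K+y) - c(K+y) = K - c(K)$, and $V(K+y) = V(K)$. Measurability and nonnegativity are immediate, so $h$ satisfies the hypotheses of Theorem \ref{typical_zero}.

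Applying that theorem and unwinding yields
\[
\E[g(Y)] \;=\; \E[h(Z_0)] \;=\; \frac{1}{\E[V(Z)]}\,\E[h(Z)\,V(Z)] \;=\; \frac{1}{\E[V(Z)]}\,\E\!\left[\int_{\R^n} g(x)\,\mathds{1}\{x \in Z - c(Z)\}\,\dint x\right].
\]
Because the grain distribution $\mathds{Q}$ is supported on $\mathcal{C}_0$, we have $c(Z) = 0$ almost surely, so $Z - c(Z) = Z$. Fubini--Tonelli (justified by nonnegativity of $g$) then produces
\[
\E[g(Y)] \;=\; \frac{1}{\E[V(Z)]} \int_{\R^n} g(x)\, \PP(x \in Z)\,\dint x,
\]
which is the claimed density formula.

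The only real obstacle is verifying that $h$ is genuinely translation invariant, which is needed to invoke Theorem \ref{typical_zero}; this is where centering by $c(Z_0)$ (as opposed to, say, the origin) in the statement of the lemma is essential. The remaining manipulations are bookkeeping: cancelling $V(Z)$ and applying Fubini.
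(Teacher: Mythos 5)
Your proof is correct and follows essentially the same route as the paper: condition on $X$ (equivalently on $Z_0$), define the translation-invariant functional, invoke Theorem \ref{typical_zero}, use $c(Z)=0$ a.s., and finish with Fubini. No gaps.
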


\begin{proof}
Let $g: \R^n \to \R$ be a nonnegative measurable function. By the definition of $Y$,
\begin{align*}
\mathds{E}[g(Y)] &= \mathds{E}\left[ \mathds{E}[g(Y) | Z_0] \right] =  \mathds{E}\left[ \frac{1}{V(Z_0)} \int_{\R^n} g(x) 1_{\{x \in Z_0 - c(Z_0) \}}\dint x \right].
\end{align*}
Note that the nonnegative function $f: \mathcal{K}' \rightarrow \R$ defined by $f(\cdot) := \frac{1}{V(\cdot)} \int_{\R^n} g(x) 1_{\{x \in \cdot - c(\cdot) \}} \dint x$ is translation invariant, since for any $t \in \R$ and $K \in \mathcal{K}'$,
\begin{align*}
f(K + t) &= \frac{1}{V(K + t)} \int_{\R^n} g(x) 1_{\{x \in K + t - c(K + t)\}}\dint x \\
&= \frac{1}{V(K)} \int_{\R^n} g(x) 1_{\{x \in K + t - c(K) - t \}}\dint x \\
&= \frac{1}{V(K)} \int_{\R^n} g(x) 1_{\{x \in K - c(K)\}}\dint x = f(K).
\end{align*}
Then, by Theorem \ref{typical_zero} and the fact that $c(Z) = 0$,
\begin{align*}
\mathds{E}[g(Y)] &= \mathds{E}\left[ \frac{1}{V(Z_0)} \int_{\R^n} g(x) 1_{\{x \in Z_0 - c(Z_0) \}}\dint x \right] \\
&= \frac{1}{\mathds{E}[V(Z)]} \mathds{E}\left[ V(Z)  \frac{1}{V(Z)} \int_{\R^n} g(x) 1_{\{x \in Z - c(Z) \}}dx \right] \\
&= \mathds{E}\left[ \int_{\R^n} g(x) \frac{1_{\{x \in Z\}}}{\mathds{E}[V(Z)]} \dint x \right]. 
\end{align*}
Applying Fubini's Theorem gives the final result.
\end{proof}


\section{Poisson-Voronoi Mosaic}


The first type of random mosaic we consider comes from the Voronoi cells of a Poisson point process in $\R^n$. Let $N$ be a stationary Poisson point process with intensity $\lambda$ and, for $x \in N$, define the Voronoi cell of $N$ with center $x$ by
\[C(x, N) := \{ z \in R^n : |z - x| \leq |z - y| \text{ for all } y \in N\}.\]
The collection $X := \{C(x, N) : x \in N\}$ is a stationary random mosaic and is called the Poisson-Voronoi mosaic induced by $N$. The intensity $\lambda$ of the underlying Poisson point process is the cell intensity of the induced mosaic. 

We first show that for a stationary Poisson-Voronoi mosaic $X$, the density of the random vector $Y$ that is uniformly distributed in $Z_0 - c(Z_0)$, conditioned on $X$, is log-concave. We also compute the moments of its norm. In this case, the center function $c$ assigns to each cell the point of the underlying Poisson point process it corresponds to. 

\begin{prop}\label{p:vor_moment}
Let $Z_0$ be the zero cell of a stationary Poisson-Voronoi mosaic associated to a Poisson point process $N$ with intensity $\lambda$ in $\R^n$. Define the random vector $Y$, such that conditioned on $Z_0$,
$Y \sim \mathrm{Uniform}(Z_0- c(Z_0))$. 
Then, $Y$ has the log-concave density
\[f_Y(x) =   \lambda e^{-\lambda\kappa_n |x|^n},\]
and for all $k \in \mathds{N}$,
\[ \mathds{E}[|Y|^k] = \frac{\Gamma(1 + \frac{k}{n})}{(\lambda \kappa_n)^{\frac{k}{n}}}.\] 
\end{prop}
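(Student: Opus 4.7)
The plan is to compute the density $f_Y$ by invoking Lemma \ref{X_density}, which reduces the problem to evaluating $\mathds{P}(x \in Z)$ for the typical cell $Z$ of the Poisson-Voronoi mosaic and using the identity $\mathds{E}[V(Z)] = 1/\lambda$ from \eqref{e:EVZ}.

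First I would use the Palm interpretation \eqref{e:typicalPalm}: $\mathds{P}(x \in Z) = \mathds{P}^0(x \in Z_0)$, the probability that $x$ lies in the Voronoi cell of the origin conditioned on the origin being a point of $N$. For Voronoi tessellations the event $\{x \in Z_0\}$ is the event that $0$ is the closest point of $N$ to $x$, i.e.\ that no other point of $N$ lies in the open ball $B_n(|x|)$ centered at $x$. By Slivnyak's theorem for Poisson processes, under $\mathds{P}^0$ the remaining points still form a stationary Poisson process of intensity $\lambda$, so the void probability formula yields
\[
\mathds{P}(x \in Z) = \exp\bigl(-\lambda \kappa_n |x|^n\bigr).
\]
Combining with $\mathds{E}[V(Z)] = 1/\lambda$ via Lemma \ref{X_density} gives the claimed density $f_Y(x) = \lambda e^{-\lambda \kappa_n |x|^n}$.

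For log-concavity, I would note that $\log f_Y(x) = \log \lambda - \lambda \kappa_n |x|^n$, so it suffices to check that $x \mapsto |x|^n$ is convex on $\R^n$. This follows since $|x|$ is a norm (hence convex and nonnegative) and $t \mapsto t^n$ is convex and nondecreasing on $[0, \infty)$ for $n \geq 1$, and the composition of a nondecreasing convex function with a convex function is convex.

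For the moments, I would pass to spherical coordinates and use $\omega_n = n \kappa_n$ to write
\[
\mathds{E}[|Y|^k] = \lambda n \kappa_n \int_0^\infty r^{n+k-1} e^{-\lambda \kappa_n r^n}\, \dint r,
\]
then substitute $u = \lambda \kappa_n r^n$ so that $n r^{n-1} \dint r = (\lambda \kappa_n)^{-1} \dint u$ and $r^k = (u/(\lambda\kappa_n))^{k/n}$, which reduces the integral to $(\lambda \kappa_n)^{-1-k/n} \int_0^\infty u^{k/n} e^{-u} \dint u$. Recognizing the remaining integral as $\Gamma(1 + k/n)$ yields $\mathds{E}[|Y|^k] = \Gamma(1+k/n)/(\lambda \kappa_n)^{k/n}$.

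None of the steps present a serious obstacle; the only subtle point is the correct application of Slivnyak's theorem to identify $\mathds{P}^0(x \in Z_0)$ with a void probability of the Poisson process itself rather than of the Palm version, which is what makes the closed-form density available.
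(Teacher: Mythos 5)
Your proof is correct and follows essentially the same path as the paper's: Lemma~\ref{X_density} combined with \eqref{e:EVZ}, the Palm/Slivnyak identification of $\mathds{P}(x\in Z)$ with a Poisson void probability, and the substitution $u=\lambda\kappa_n r^n$ for the moments. The only addition is that you spell out the convexity-of-$|x|^n$ argument for log-concavity, which the paper simply calls ``clear.''
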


Note that the moments of $|Y|$ have the following asymptotic approximation: by \eqref{e:vball},
\[ \mathds{E}[|Y|^k] \sim  \frac{n^{\frac{k}{2}}}{\lambda^{\frac{k}{n}} (2\pi e)^{\frac{k}{2}}}, \qquad \text{ as } n \to \infty.\]
The fact that $Y$ has a radial and log-concave density implies $|Y|$ concentrates to 
 \begin{equation}\label{e:sigma}
 \mathds{E}[|Y|^2]^{\frac{1}{2}} = \frac{\Gamma(1 + \frac{2}{n})^{\frac{1}{2}}}{(\lambda \kappa_n)^{\frac{1}{n}}} \sim \frac{\sqrt{n}}{\lambda^{\frac{1}{n}}\sqrt{2\pi e}}, \text{ as } n \to \infty,
 \end{equation}
by the thin-shell estimate \eqref{e:thinshell}. We can also prove strong concentration inequalities by direct computation.

\begin{theorem}\label{t:vor_con}
Define the random vector $Y$ in $\R^n$ as in Proposition \ref{p:vor_moment}.
Let $\sigma^2 = \mathds{E}|Y|^2$. Then there exists an absolute constant $c > 0$ such that for all $t > 0$,
\[ \mathds{P}\left(|Y| \geq (1 + t)\sigma \right) \leq e^{-ce^{n \ln (1 + t)}},\]
and for all $t \in (0,1)$ and $n \geq 2$,
\[\mathds{P}\left(|Y| \leq (1 - t)\sigma \right) \leq e^{n \ln (1 - t)}.\]
\end{theorem}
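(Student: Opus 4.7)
The plan is to reduce everything to a simple one-dimensional computation via a change of variables, and then extract the two tail bounds by elementary manipulations of the exponential distribution and of $\Gamma(1+2/n)$.

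By Proposition \ref{p:vor_moment}, the density of $|Y|$ is $f_{|Y|}(r) = n\lambda\kappa_n r^{n-1} e^{-\lambda\kappa_n r^n}$ for $r \ge 0$. Setting $U := \lambda\kappa_n |Y|^n$, a direct substitution shows that $U$ is distributed as a standard exponential, so
\[
|Y| \stackrel{d}{=} (\lambda\kappa_n)^{-1/n} U^{1/n}, \qquad U \sim \mathrm{Exp}(1).
\]
From Proposition \ref{p:vor_moment} we have $\sigma = (\lambda\kappa_n)^{-1/n}\Gamma(1+2/n)^{1/2}$. Writing $a := \Gamma(1+2/n)^{1/2}$, the event $\{|Y|\ge (1+t)\sigma\}$ becomes $\{U \ge (1+t)^n a^n\}$, so
\[
\mathds{P}\!\left(|Y|\ge (1+t)\sigma\right) = \exp\bigl(-a^n (1+t)^n\bigr) = \exp\bigl(-a^n\, e^{n\ln(1+t)}\bigr).
\]
Similarly, the event $\{|Y|\le (1-t)\sigma\}$ translates to $\{U \le (1-t)^n a^n\}$, giving
\[
\mathds{P}\!\left(|Y|\le (1-t)\sigma\right) = 1 - \exp\bigl(-a^n(1-t)^n\bigr).
\]

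To complete the upper tail it suffices to show that $a^n = \Gamma(1+2/n)^{n/2}$ is bounded below by an absolute constant $c > 0$. Since $\Gamma(1+2/n)\to 1$ and $\frac{n}{2}\ln \Gamma(1+2/n) \to -\gamma$ (Euler's constant) as $n\to\infty$, and since $\Gamma(1+x)$ is continuous and strictly positive on $[0,1]$, the quantity $\Gamma(1+2/n)^{n/2}$ is a continuous positive function of $n$ on $\mathds{N}$ with a positive limit; consequently it admits a uniform lower bound $c>0$. This yields the first claimed inequality.

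For the lower tail, I use the elementary estimate $1-e^{-x}\le x$ to get $\mathds{P}(|Y|\le (1-t)\sigma) \le a^n(1-t)^n$. The remaining point is that for $n\ge 2$ we have $2/n \in (0,1]$, and on the interval $[1,2]$ the Gamma function satisfies $\Gamma(s)\le 1$, with equality only at $s=1,2$. Hence $\Gamma(1+2/n)\le 1$ and so $a^n\le 1$ for all $n\ge 2$, giving
\[
\mathds{P}\!\left(|Y|\le (1-t)\sigma\right)\le (1-t)^n = e^{n\ln(1-t)}.
\]
The only non-mechanical step is controlling $\Gamma(1+2/n)^{n/2}$ in the two directions; both reduce to standard monotonicity and asymptotics of the Gamma function, so no real obstacle arises.
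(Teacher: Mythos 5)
Your proof is correct and follows essentially the same route as the paper: compute the exact distribution function of $|Y|$ (equivalently, observe $\lambda\kappa_n|Y|^n\sim\mathrm{Exp}(1)$), substitute $\sigma$, bound $1-e^{-x}\le x$ for the lower tail together with $\Gamma(1+2/n)\le 1$ for $n\ge 2$, and for the upper tail bound $\Gamma(1+2/n)^{n/2}$ below by a constant. The one place where you diverge is that your lower bound on $\Gamma(1+2/n)^{n/2}$ is a soft ``positive sequence with positive limit $e^{-\gamma}$ has a positive infimum'' argument, whereas the paper gives the explicit constant $c=e^{-1}$ via $\Gamma(1+2/n)=\Gamma(2+2/n)/(1+2/n)$, the monotonicity of $\Gamma$ on $[2,\infty)$, and $(1+2/n)^{n/2}\le e$; your argument is valid but does not produce an explicit $c$.
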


Considering a sequence of these vectors in increasing dimensions, we obtain the following threshold result when the cell intensity grows exponentially with dimension.
\begin{theorem}\label{t:vor_thresh}
For each $n$, let $Z_{0,n}$ be the zero cell of a stationary Poisson-Voronoi mosaic in $\R^n$ with cell intensity $e^{n \rho_n}$, and assume $\lim_{n \to \infty} \rho_n = \rho \in \mathds{R}$. Define the random vectors $Y_n$ such that, conditioned on $Z_{0,n}$, $Y_n \sim \mathrm{Uniform}(Z_{0,n} - c(Z_{0,n}))$. Then,
\begin{align*}
\lim_{n \rightarrow \infty} \mathds{P}(|Y_n| \leq \sqrt{n}R) = \begin{cases}
0, & R < e^{-\rho}\left(2\pi e \right)^{-\frac{1}{2}} \\ 1, & R > e^{-\rho}\left(2\pi e \right)^{-\frac{1}{2}}. \end{cases}
\end{align*}
For $R < e^{-\rho}\left(2\pi e \right)^{-\frac{1}{2}}$,
\begin{align*}
\lim_{n \rightarrow \infty} \frac{1}{n} \ln \mathds{P}(|Y_n| \leq \sqrt{n}R) = \rho + \frac{1}{2} \ln (2 \pi e) + \ln R,
\end{align*}
and for $R > e^{-\rho}\left(2\pi e \right)^{-\frac{1}{2}}$,
\begin{align*}
\lim_{n \rightarrow \infty} \frac{1}{n} \ln \left( - \ln \mathds{P}(|Y_n| \geq \sqrt{n}R) \right) = \rho + \frac{1}{2} \ln (2 \pi e) + \ln R.
\end{align*}
\end{theorem}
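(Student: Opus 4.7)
The plan is to derive everything from the explicit formula for the density of $Y_n$ given in Proposition \ref{p:vor_moment}, which reduces the theorem to a one-dimensional asymptotic calculation. Writing $\lambda_n = e^{n\rho_n}$, the density $f_{Y_n}(x) = \lambda_n e^{-\lambda_n \kappa_n |x|^n}$ is radial, so I first compute the CDF of $|Y_n|$ by passing to polar coordinates and using the substitution $u = \lambda_n \kappa_n t^n$:
\begin{equation*}
\mathds{P}(|Y_n|\le r) = \int_0^r \lambda_n e^{-\lambda_n \kappa_n t^n} n\kappa_n t^{n-1}\,\dint t = 1 - e^{-\lambda_n \kappa_n r^n}.
\end{equation*}
Setting $r = \sqrt{n}\,R$ gives the exact expression
\begin{equation*}
\mathds{P}(|Y_n|\le \sqrt{n}R) = 1 - \exp\bigl(-\lambda_n \kappa_n n^{n/2} R^n\bigr).
\end{equation*}

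Next I would analyze the exponent $x_n := \lambda_n \kappa_n n^{n/2} R^n$ using the asymptotic $\kappa_n \sim (\pi n)^{-1/2}(2\pi e/n)^{n/2}$ from \eqref{e:vball}. Taking logarithms,
\begin{equation*}
\ln x_n = n\rho_n + \tfrac{n}{2}\ln(2\pi e) + n\ln R - \tfrac{1}{2}\ln(\pi n) + o(1),
\end{equation*}
so that $\tfrac{1}{n}\ln x_n \to \alpha := \rho + \tfrac{1}{2}\ln(2\pi e) + \ln R$. This immediately gives the threshold: if $R < e^{-\rho}(2\pi e)^{-1/2}$ then $\alpha<0$, $x_n\to 0$, and $\mathds{P}(|Y_n|\le \sqrt{n}R) \sim x_n \to 0$; if $R > e^{-\rho}(2\pi e)^{-1/2}$ then $\alpha>0$, $x_n\to \infty$, and $\mathds{P}(|Y_n|\le \sqrt{n}R) \to 1$.

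For the rates, in the regime $R < e^{-\rho}(2\pi e)^{-1/2}$ I would use the elementary asymptotic $\ln(1-e^{-x_n}) = \ln x_n + o(1)$ (valid as $x_n\to 0$) to conclude
\begin{equation*}
\tfrac{1}{n}\ln \mathds{P}(|Y_n|\le \sqrt{n}R) = \tfrac{1}{n}\ln x_n + o(1/n) \to \alpha.
\end{equation*}
In the regime $R > e^{-\rho}(2\pi e)^{-1/2}$, the identity $-\ln \mathds{P}(|Y_n|\ge \sqrt{n}R) = x_n$ is exact, so $\tfrac{1}{n}\ln(-\ln \mathds{P}(|Y_n|\ge\sqrt{n}R)) = \tfrac{1}{n}\ln x_n \to \alpha$, which is the doubly exponential rate claimed.

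There is no real obstacle, since Proposition \ref{p:vor_moment} does all the geometric work: the random vector $Y_n$ is so tractable that its distribution reduces to a single-variable Weibull-type law, and the theorem is a direct consequence of Stirling's formula applied to $\kappa_n$. The only point that requires a little care is controlling the sub-leading $-\tfrac{1}{2}\ln(\pi n)$ term and the hypothesis $\rho_n\to \rho$ so that the additive correction $\rho_n - \rho$ vanishes uniformly when divided by $n$; both are straightforward.
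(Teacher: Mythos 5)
Your proof is correct and follows essentially the same route as the paper: both reduce the theorem to the explicit CDF $\mathds{P}(|Y_n|\le r)=1-e^{-\lambda_n\kappa_n r^n}$, apply Stirling's asymptotics for $\kappa_n$ from \eqref{e:vball}, and then analyze the exponent $x_n=\lambda_n\kappa_n n^{n/2}R^n$ in the two regimes. The only cosmetic difference is that you handle the lower tail via $\ln(1-e^{-x_n})=\ln x_n+o(1)$ while the paper sandwiches $1-e^{-x_n}$ between $\alpha x_n$ and $x_n$ for large $n$; these are equivalent.
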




We now provide proofs of the above results. 
\subsection{Proof of Proposition \ref{p:vor_moment}}
By Lemma \ref{X_density}, \eqref{e:EVZ}, and \eqref{e:typicalPalm}, the density of $Y$ satisfies
\begin{align*}
f_Y(x) = \frac{\mathds{P}(x \in Z)}{\E[\text{V}(Z)]} = \lambda \mathds{P}^{0}(x \in Z_0) = \lambda \mathds{P}(N(B_n(x, |x|)) = 0) = \lambda e^{-\lambda\kappa_n |x|^n},
\end{align*}
where $B_n(x, |x|)$ denotes the ball in $\R^n$ with center $x$ and radius $|x|$, and the third equality follows from Slivnyak's theorem \cite[Theorem 3.3.5]{weil}.
This density is clearly log-concave. 

For the moments, switching to polar coordinates and using another change of variables ($y = \lambda \kappa_n r^n$) gives
\begin{align*}
\mathds{E}[|Y|^k] &= \lambda \int_{\R^n}|x|^k e^{-\lambda \kappa_n |x|^n} \dint x = \lambda n\kappa_n \int_0^{\infty} r^{n + k - 1} e^{- \lambda \kappa_n r^n} \dint r \\
&= \lambda n\kappa_n \int_0^{\infty} \left(\frac{y}{\lambda \kappa_n}\right)^{1 + \frac{k}{n} - \frac{1}{n}} e^{- y} \frac{1}{n\lambda \kappa_n} \left(\frac{y}{\lambda \kappa_n}\right)^{\frac{1}{n} - 1} \dint y \\
&=(\lambda \kappa_n)^{-\frac{k}{n}} \int_0^{\infty} y^{\frac{k}{n}} e^{-y} \dint y =(\lambda \kappa_n)^{-\frac{k}{n}} \Gamma\left(1 + \frac{k}{n}\right).
\end{align*}


\subsection{Proof of Theorem \ref{t:vor_con}}
By Proposition \ref{p:vor_moment},
\begin{align}\label{e:Ycdf}
 \mathds{P}(|Y| \leq R) &= \lambda \int_{B_n(R)} e^{-\lambda\kappa_n |x|^n} \dint x 
 = \lambda n \kappa_n \int_0^{R}r^{n-1} e^{-\lambda\kappa_n r^n} \dint r \\  
&= \int_0^{\lambda\kappa_nR^n} e^{- y} \dint y 
= 1 - e^{- \lambda \kappa_n R^n}. \nonumber 
\end{align}
By \eqref{e:sigma}, $\sigma^n = (\lambda \kappa_n)^{-1}\Gamma\left(1 + \frac{2}{n}\right)^{\frac{n}{2}}$, and thus
\[\mathds{P}(|Y| \leq (1 - t)\sigma) = 1 - e^{- \Gamma(1 + \frac{2}{n})^{\frac{n}{2}}(1-t)^n}. \]
Note that $\Gamma(x) \leq 1$ for $x \in [1,2]$. By the inequality $1 - e^{-x} \leq x$ and the assumption $n \geq 2$,
\begin{align*}
\mathds{P}(|Y| \leq (1 - t)\sigma) \leq \Gamma\left(1 + \frac{2}{n}\right)^{\frac{n}{2}}(1-t)^n \leq \Gamma(2)^{\frac{n}{2}} e^{n \ln (1-t)} = e^{n\ln (1-t)}.
\end{align*}
Similarly, by \eqref{e:sigma} and \eqref{e:Ycdf},
\[
\mathds{P}(|Y| \geq (1 + t)\sigma) = 1 - \mathds{P}(|Y| \leq (1 + t)\sigma) 
= e^{-  \Gamma(1 + \frac{2}{n})^{\frac{n}{2}} (1 + t)^n}.
\]
By the identity $\Gamma(x + 1) = x\Gamma(x)$, the fact that $\Gamma(x)$ is increasing for $x \geq 2$, and inequality $1 + x \leq e^x$,
\[\Gamma\left(1 + \frac{2}{n}\right)^{\frac{n}{2}} = \left(\frac{\Gamma(2 + \frac{2}{n})}{1 + \frac{2}{n}}\right)^{\frac{n}{2}} \geq \frac{\Gamma(2)^{\frac{n}{2}}}{e} = e^{-1}.\]
Hence, for $c = e^{-1}$,
\[ \PP(|Y| \geq (1 + t)\sigma)  \leq e^{-ce^{n \ln (1 + t)}}.\]

\subsection{Proof of Theorem \ref{t:vor_thresh}}
First, by \eqref{e:Ycdf}, 
\begin{align*}
\mathds{P}(|Y_n| \leq \sqrt{n}R) 
= 1 - e^{- e^{n \rho_n}\kappa_n(\sqrt{n}R)^n}.
\end{align*}
For all $R > 0$, by $\eqref{e:vball}$, 
\begin{align*}
e^{n \rho_n}\kappa_n(\sqrt{n}R)^n \sim \frac{1}{\sqrt{\pi n}} \left(e^{\rho_n} (2\pi e)^{\frac{1}{2}} R\right)^n, 
\text{ as } n \to \infty.
\end{align*}
Now assume $R < e^{-\rho}\left(2\pi e \right)^{-\frac{1}{2}}$. The above asymptotic implies $\mathds{P}(|Y_n| \leq \sqrt{n}R) \to 0$ as $n \to \infty$ and there is an $\alpha > 0$ such that for all $n$ large enough,
\begin{align*}
\alpha(e^{n \rho_n}\kappa_n(\sqrt{n}R)^n) \leq 1 - e^{- e^{n \rho_n}\kappa_n(\sqrt{n}R)^n} \leq e^{n \rho_n}\kappa_n(\sqrt{n}R)^n.
\end{align*}
Thus,
\begin{align*}
\lim_{n \rightarrow \infty} \frac{1}{n} \ln \mathds{P}(|Y_n| \leq \sqrt{n}R) = \rho + \frac{1}{2} \ln (2 \pi e) + \ln R.
\end{align*}
Next assume $R > e^{-\rho}\left(2\pi e \right)^{-\frac{1}{2}}$. Then, 
$\mathds{P}(|Y_n| \geq \sqrt{n}R) = e^{- e^{n \rho_n}\kappa_n(\sqrt{n}R)^n} \rightarrow 0$
as $n \rightarrow \infty$, and 
\begin{align*}
\lim_{n \rightarrow \infty} \frac{1}{n} \ln \left( - \ln \mathds{P}(|Y_n| \geq \sqrt{n}R) \right) = \rho + \frac{1}{2} \ln (2 \pi e) + \ln R.
\end{align*}



\section{Poisson Hyperplane Mosaic}

The second type of mosaic considered here is the stationary random mosaic induced by a stationary and isotropic Poisson hyperplane process. 
A hyperplane process in $\R^n$ is a point process in the space of $(n-1)$-dimensional affine subspaces in $\R^n$, denoted by $\mathcal{H}^n$. 

The following theorem provides a decomposition for the intensity measure of a stationary and isotropic hyperplane process. Note that elements of the space $\mathcal{H}^n$ are of the form $H(u,\tau) := \{x \in \R^n : \langle x, u \rangle = \tau\}$, where $u \in \R^n$ and $\tau \in \R$.

\begin{theorem}\label{t:hyp_int}(Theorem 4.4.2 and (4.33) in \cite{weil})
Let $\hat{X}$ be a stationary and isotropic hyperplane process in $\R^n$ with intensity measure $\Theta \neq 0$. Then, there is a unique number $\gamma \in (0, \infty)$ such that for all nonnegative measurable functions $f$ on $\mathcal{H}^n$,
\[\int_{\mathcal{H}^n} f \dint\Theta =  \gamma \int_{\sS^{n-1}} \int_{-\infty}^{\infty} f(H(u, \tau))\dint \tau \sigma_{n-1}(\dint u).\]
\end{theorem}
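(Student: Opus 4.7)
The plan is to parametrize $\mathcal{H}^n$ by $(u,\tau) \in \sS^{n-1}\times\R$ via $H(u,\tau) = \{x : \langle x,u\rangle = \tau\}$, with the identification $(u,\tau) \sim (-u,-\tau)$, and to pin down $\Theta$ through a disintegration driven by the combined translation and rotation invariance it inherits from the hypotheses on $\hat{X}$. The key observation is that translation by $y\in\R^n$ sends $H(u,\tau)$ to $H(u,\tau+\langle y,u\rangle)$, while a rotation $\rho\in SO(n)$ sends $H(u,\tau)$ to $H(\rho u,\tau)$. Thus stationarity and isotropy translate into invariance of the lifted measure $\tilde\Theta$ on the two-to-one cover $\sS^{n-1}\times\R$ under these two explicit actions.

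First I would disintegrate $\tilde\Theta$ over its projection $\nu$ to the sphere, writing $\tilde\Theta(\dint u,\dint\tau) = \mu_u(\dint\tau)\,\nu(\dint u)$ for a measurable family $u\mapsto\mu_u$ of measures on $\R$. For each $u_0\in\sS^{n-1}$ and $c\in\R$, the translation $y=cu_0$ acts on a fiber $\{u\}\times\R$ by the shift $\tau\mapsto\tau+c\langle u_0,u\rangle$, so stationarity forces $\mu_u$ to be invariant under that shift for $\nu$-a.e.\ $u$. Letting $u_0$ and $c$ range freely, the numbers $c\langle u_0,u\rangle$ cover all of $\R$ for each fixed $u$, hence $\mu_u$ is translation invariant on $\R$, yielding $\mu_u = c(u)\,\dint\tau$ for some measurable $c:\sS^{n-1}\to[0,\infty]$.

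Next I would apply isotropy: invariance under $(u,\tau)\mapsto(\rho u,\tau)$ for every $\rho\in SO(n)$ transports the disintegration, so $c(\rho u)=c(u)$ and $\nu$ is $SO(n)$-invariant on $\sS^{n-1}$. Since $SO(n)$ acts transitively and $\sigma_{n-1}$ is, up to scaling, the unique rotation-invariant finite Borel measure on $\sS^{n-1}$, both $c$ and $\nu$ reduce to constant multiples of their uniform versions, giving $\tilde\Theta = \gamma' \,\sigma_{n-1}\otimes(\dint\tau)$ for a single constant $\gamma'$. Pushing back through the two-to-one cover and absorbing the resulting factor of $2$ into $\gamma := \gamma'/2$ delivers the stated identity for all nonnegative measurable $f$ on $\mathcal{H}^n$. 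Uniqueness of $\gamma$ is automatic by testing on any set of positive measure on the right-hand side, while $\gamma\in(0,\infty)$ follows from $\Theta\neq 0$ together with the local finiteness of intensity measures of point processes.

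The main obstacle I expect is the translation step: a single translation $y\in\R^n$ does not act by the same $\tau$-shift on every fiber of $\sS^{n-1}\times\R\to\sS^{n-1}$, so the full translation invariance of each fiber measure $\mu_u$ must be teased out by varying $y$ and using the essential uniqueness of disintegrations, rather than invoked by a single application of Haar uniqueness. The bookkeeping for the two-to-one cover $\sS^{n-1}\times\R\to\mathcal{H}^n$, and the resulting factor of $2$ in the normalization of $\gamma$, is a secondary but essential care point.
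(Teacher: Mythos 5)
Your disintegration argument is correct and is essentially the standard proof in Schneider and Weil (the paper states this as a citation to Theorem~4.4.2 and (4.33) there rather than giving its own proof): one lifts $\Theta$ to $\sS^{n-1}\times\R$, disintegrates over the direction marginal, uses translation invariance together with the fact that a locally finite measure on $\R$ invariant under a dense subgroup of translations is a multiple of Lebesgue measure to identify the fibers, and then uses rotation invariance together with the uniqueness of the $SO(n)$-invariant probability on $\sS^{n-1}$ to identify the base. You have correctly flagged the two genuine care points, namely the $u$-dependence of the fiber shift (handled by a countable dense family of translations and closedness of the invariance group) and the normalization across the two-to-one cover; one small imprecision is that isotropy only determines the product $c(u)\,\nu(\dint u)$ rather than $c$ and $\nu$ separately, owing to the gauge freedom in a disintegration, but this is exactly the quantity you need.
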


The parameter $\gamma$ is called the intensity of the hyperplane process. Its relation to the cell intensity $\lambda$ of the induced random mosaic is given by (see \cite[(10.4.6)]{weil})
\begin{equation}\label{e: cell_int}
 \lambda = \kappa_n\left( \frac{ \gamma\kappa_{n-1}}{n \kappa_n }\right)^n.\end{equation}


Now, define the center function $c$ to return the center of the inball of a convex polytope, where the inball is the largest ball inside the polytope. A representation for the distribution of the typical cell of a stationary and isotropic Poisson hyperplane mosaic with this center function was obtained by Miles \cite{Miles} in dimension two, and was extended by Calka \cite{Calka_typ1Published} to all dimensions. The result can be described as follows. 

Let $\hat{X}$ be a stationary and isotropic Poisson hyperplane process with intensity $\gamma$ and $X$ be the induced random mosaic. Let $R \in \R^{+}$ and $(U_0, \ldots , U_n) \in (\mathds{S}^{n-1})^{n + 1}$ be independent random variables, where $R$ is exponentially distributed with parameter $2 \gamma$ and $(U_0, \ldots , U_n) $ has a density with respect to the uniform measure that is proportional to the volume of the simplex constructed with these $n+1$ vectors multiplied by the indicator that this simplex contains the origin. In particular, $R$ has the distribution of the inradius of the typical cell of $X$.
Then, let $\hat{X}_R$ be the hyperplane process $\hat{X}$ restricted to $\R^n \backslash B_n(r)$. Let $\mathcal{C}_1$ be the polytope containing the origin obtained by intersecting the $n+1$ halfspaces bounded by the hyperplanes $H(U_i , R)$, $i = 0, \ldots, n$, and $\mathcal{C}_2$ be the zero cell of the mosaic induced by $\hat{X}_R$. Then, the typical cell of $X$ is distributed as $\mathcal{C}_1 \cap \mathcal{C}_2$.

To state the formal result, we first define the halfspace
\[H^{-}(u,r) :=\{ x \in \R^n: \langle x, u \rangle \leq r\}, \qquad  u \in \mathds{S}^{n-1}, \, r \in \R.\]
Also, for a hyperplane $H$, let $H_0^+$ denote the closed halfspace bounded by $H$ that contains the origin. 

\begin{theorem}\label{calka_typical} (Theorem 10.4.6 in \cite{weil}) 
Let $\hat{X}$ be a stationary and isotropic Poisson hyperplane process in $\R^n$ with intensity $\gamma$. Let $\mathds{Q}$ be the distribution of the typical cell $Z$ of the induced random mosaic $X$ with respect to the inball center as the center function. Then, for all Borel sets $A \in \mathcal{B}(\mathcal{K})$,
\begin{align*}
\mathds{Q}(A) &= \mathds{E}[V(Z)]\frac{\gamma^{n+1}}{(n+1)} \int_{0}^{\infty}  \int_{(\sS^{n-1})^{n+1}}  e^{-2\gamma r} \mathds{P}\left( \bigcap_{H \in \hat{X} \cap \mathcal{F}^{B_n(r)}} H_0^+ \cap \bigcap_{j=0}^n H^{-}(u_j, r) \in A\right) \\
& \qquad \cdot \triangle_n(u_0, ..., u_n)1_P(u_0, ..., u_n) \sigma_{n-1}(\dint u_0)...\sigma_{n-1}(\dint u_n) \dint r.
\end{align*}
\end{theorem}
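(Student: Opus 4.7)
The plan is to compute $\mathds{Q}(A) = \mathds{P}(Z \in A)$ directly via the Campbell/Mecke formula for the stationary point process $\mathcal{M} := \{c(K) : K \in X\}$ of inball centres, which has intensity $\lambda = \mathds{E}[V(Z)]^{-1}$ by \eqref{e:EVZ}. Taking any Borel $B \subset \R^n$ of unit Lebesgue measure, stationarity of $\mathcal{M}$ gives
\[
\mathds{Q}(A) \;=\; \frac{1}{\lambda}\, \mathds{E}\!\left[\sum_{c \in \mathcal{M} \cap B} 1_A(K(c) - c)\right],
\]
where $K(c) \in X$ is the cell whose inball is centred at $c$. The key structural input is that, almost surely, each inball of a cell of a Poisson hyperplane mosaic is supported by \emph{exactly} $n+1$ hyperplanes of $\hat{X}$, so the sum over $c \in \mathcal{M}$ can be rewritten as a sum over unordered $(n+1)$-subsets of $\hat{X}$.

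I would then apply the multivariate Slivnyak--Mecke formula to pass from the corresponding sum over ordered $(n+1)$-tuples (dividing by $(n+1)!$) to an integral against the intensity measure of $\hat{X}$. By Theorem \ref{t:hyp_int} this integral disintegrates as
\[
\frac{\gamma^{n+1}}{(n+1)!}\int_{(\mathds{S}^{n-1})^{n+1}}\int_{\R^{n+1}} (\cdots)\, \dint\tau_0\cdots \dint\tau_n\, \sigma_{n-1}(\dint u_0)\cdots\sigma_{n-1}(\dint u_n),
\]
and the Poisson--Slivnyak property ensures that, conditionally on these $n+1$ selected hyperplanes, the residual process is again an independent Poisson hyperplane process of intensity $\gamma$. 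The event that these $n+1$ hyperplanes $H(u_j,\tau_j)$ actually support an inball imposes three conditions: they must all be tangent to a common ball $B_n(z,r)$, i.e.\ $\tau_j = \langle z, u_j\rangle + r$; no residual hyperplane may meet the open ball $B_n(z,r)$, which contributes the factor $e^{-2\gamma r}$ via $\Theta(\mathcal{F}_{B_n(r)}) = 2\gamma r$ (by Theorem \ref{t:hyp_int}); and $z$ must lie in the relative interior of the convex hull of $u_0,\dots,u_n$, the standard dual characterisation of the inball by a separating simplex, encoded by $1_P(u_0,\dots,u_n)$.

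The crucial step, and the main obstacle, is the change of variables at fixed directions $(u_0, \dots, u_n)$ from $(\tau_0, \dots, \tau_n)$ to $(z, r)$: the Jacobian of the affine map $\tau_j = \langle z, u_j\rangle + r$ is the absolute determinant of the $(n+1)\times(n+1)$ matrix with rows $(u_j, 1)$, which equals $n!\,\triangle_n(u_0, \dots, u_n)$ by the classical simplex-volume formula. Combined with the $1/(n+1)!$ ordering factor this produces the prefactor $\triangle_n(u_0,\dots,u_n)/(n+1)$ of the statement. The $z$-integration is then absorbed by the choice of $B$: after translating by $-z$, an operation permissible by stationarity of $\hat{X}$, the integrand becomes independent of $z$ and $\int_{B} \dint z = 1$. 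The remaining conditional probability over the translated residual process is exactly $\mathds{P}\bigl(\bigcap_{H \in \hat{X}\cap \mathcal{F}^{B_n(r)}} H_0^+ \cap \bigcap_{j=0}^n H^-(u_j, r) \in A\bigr)$, integration over $r \in (0,\infty)$ remains, and the identification $\lambda^{-1} = \mathds{E}[V(Z)]$ closes the formula. Justifying the almost-sure genericity that every inball is supported by exactly $n+1$ hyperplanes (so that the Mecke decomposition is lossless) and carrying out the determinantal Jacobian computation are the technical heart of the proof.
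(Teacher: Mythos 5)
The paper does not supply a proof of this theorem: it is quoted as Theorem 10.4.6 from Schneider--Weil \cite{weil}, where it is attributed to Calka \cite{Calka_typ1Published} (extending Miles \cite{Miles}). So there is no internal proof to compare against, and your proposal should be judged against the literature argument.

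Your sketch reconstructs that argument in essentially the correct form: Palm/Campbell decomposition over the stationary point process of inball centres, the almost-sure genericity that each inball is determined by exactly $n+1$ tangent hyperplanes, the multivariate Slivnyak--Mecke formula, the disintegration via Theorem~\ref{t:hyp_int}, the hitting functional $\Theta(\mathcal{F}_{B_n(r)})=2\gamma r$ producing $e^{-2\gamma r}$, and the change of variables from $(\tau_0,\ldots,\tau_n)$ to $(z,r)$ with Jacobian $n!\,\triangle_n(u_0,\ldots,u_n)$, which combined with the $1/(n+1)!$ ordering factor yields the prefactor $\triangle_n/(n+1)$. Translating by $-z$ and integrating $z$ out over a unit-volume window, then invoking independence properties of the Poisson process to identify the conditional law of the residual cell, closes the computation. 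This matches the route taken in the reference.

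One slip worth flagging: you write that \emph{``$z$ must lie in the relative interior of the convex hull of $u_0,\dots,u_n$.''} That cannot be the condition, since $z\in\R^n$ while the $u_j$ lie on $\mathds{S}^{n-1}$. The correct inball-maximality condition, in the translated frame where the tangent ball is centred at the origin, is that $0\in\mathrm{conv}(u_0,\ldots,u_n)$, i.e.\ the outward normals positively span $\R^n$; this is exactly the set $P$ as the paper defines it. You correctly identify the indicator $1_P$, so this is a verbal slip rather than a conceptual error, but the geometric justification should be stated precisely. The remaining gaps you yourself flag (almost-sure uniqueness of the $n+1$ supporting hyperplanes, full determinantal computation) are genuine technical work but are standard and verifiable; as a proof outline, the proposal is sound.
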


The triangle notation $\triangle_n(u_0, ..., u_n)$ is the $n$-dimensional volume of the convex hull of the vectors $u_0, \ldots, u_n$. 
The set $P \subset (\sS^{n-1})^{n+1}$ is the set of all $(n+1)$-tuples of unit vectors such that the origin is contained in their convex hull.

For a stationary and isotropic Poisson hyperplane mosaic, we first have the following proposition. For $C \in \mathcal{K}'$, let $c(C)$ be the center of the inball of $C$.
\begin{prop}\label{p:hyp_moments}
Let $Z_0$ be the zero cell of a stationary and isotropic Poisson hyperplane mosaic in $\R^n$ with cell intensity $\lambda$. Let $Y$ be the random vector such that, conditioned on $Z_0$,
$Y \sim \mathrm{Uniform}(Z_0 - c(Z_0))$. 
Then, for all $k \geq 0$,
\[ \mathds{E}[|Y|^k] \leq \frac{\Gamma(n + k + 1)}{\Gamma(n+1)2^k} \left(\frac{\kappa_n}{\lambda}\right)^{k/n}.\]
\end{prop}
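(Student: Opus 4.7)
The plan is to use Lemma \ref{X_density} and Theorem \ref{calka_typical} to reduce $\mathds{E}[|Y|^k]$ to an expectation under Calka's representation of the typical cell, and then to drop the $\mathcal{C}_2$ factor in order to obtain a tractable upper bound.

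First, reasoning as in the proof of Lemma \ref{X_density} with the translation-invariant function $K\mapsto \frac{1}{V(K)}\int_{K-c(K)} |x|^k \dint x$ and applying Theorem \ref{typical_zero}, one obtains
\[
\mathds{E}[|Y|^k] = \lambda\, \mathds{E}_{\mathds{Q}}\!\left[\int_Z |x|^k \dint x\right].
\]
By Theorem \ref{calka_typical}, under $\mathds{Q}$ the typical cell is distributed as $\mathcal{C}_1(R, U)\cap \mathcal{C}_2$, where $R \sim \mathrm{Exp}(2\gamma)$ is independent of $(U_0,\ldots,U_n)$, whose density on $(\sS^{n-1})^{n+1}$ is proportional to $\triangle_n 1_P$. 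Since $Z \subseteq \mathcal{C}_1(R, U)$ and $\mathcal{C}_1(R, U) = R\cdot \mathcal{C}_1(1, U)$ by scaling,
\[
\int_Z |x|^k \dint x \le R^{n+k}\int_{\mathcal{C}_1(1, U)} |x|^k \dint x.
\]
Taking expectations and using the independence of $R$ and $U$, together with $\mathds{E}[R^{n+k}] = \Gamma(n+k+1)/(2\gamma)^{n+k}$, yields
\[
\mathds{E}[|Y|^k] \le \frac{\lambda\,\Gamma(n+k+1)}{(2\gamma)^{n+k}}\, \mathds{E}_U\!\left[\int_{\mathcal{C}_1(1, U)}|x|^k\dint x\right].
\]

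For the remaining expectation, one rewrites the inner integral in polar coordinates from the inball center,
\[
\int_{\mathcal{C}_1(1, U)} |x|^k \dint x = \frac{\omega_n}{n+k}\int_{\sS^{n-1}}\Bigl(\max_j \langle v, U_j\rangle\Bigr)^{-(n+k)} \sigma(\dint v),
\]
and then applies Fubini together with the rotational invariance of the $U$-distribution to reduce everything to the single inverse moment $\mathds{E}_U[(\max_j U_j^{(1)})^{-(n+k)}]$. Controlling this quantity appropriately, combined with the normalization $\int \triangle_n 1_P\dint\sigma^{n+1} = 2(n+1)\lambda/\gamma^n$ (obtained from Calka's formula with $A = \mathcal{K}$) and the cell-intensity formula \eqref{e: cell_int}, yields the stated bound.

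The main obstacle is the inverse-moment estimate for $\max_j U_j^{(1)}$ under the weighted measure $\triangle_n 1_P\,\dint\sigma^{n+1}$: the integrand blows up as the $U_j$'s concentrate near the hyperplane $\{u^{(1)}=0\}$, but $\triangle_n$ vanishes as the vectors become coplanar and $1_P$ forces the origin into their convex hull, so the integral is finite. Producing the correct constant compatible with the claimed bound requires a careful combination of these two cancellations on the sphere.
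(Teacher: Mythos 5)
Your approach diverges from the paper's and contains a gap that cannot be closed by finishing the inverse-moment estimate. The paper never bounds $Z$ by $\mathcal{C}_1$: it integrates Calka's representation in full, keeping both the simplex $\mathcal{C}_1$ and the restricted-process factor $\mathcal{C}_2$, arrives (via Lemma~\ref{l:mean_hit}) at an expression involving a single function $f_n(t)$ encoding both effects, bounds $\Gamma_u(n+1,\gamma f_n(t)R)$ by its value at the minimizer $t=0$, and uses the $R=0$ case of the same identity as a normalization to get Lemma~\ref{l:bound}. Proposition~\ref{p:hyp_moments} then follows from $\mathds{E}[|Y|^k]=k\int_0^\infty y^{k-1}\mathds{P}(|Y|\ge y)\,\dint y$ and Fubini.

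Discarding $\mathcal{C}_2$ cannot yield the stated inequality. Take $k=0$: your chain gives $\mathds{E}[|Y|^0]\le\lambda\,\mathds{E}[R^n]\,\mathds{E}_U\!\left[V(\mathcal{C}_1(1,U))\right]=\lambda\,\mathds{E}\!\left[V(\mathcal{C}_1(R,U))\right]$. But $Z=\mathcal{C}_1\cap\mathcal{C}_2\subsetneq\mathcal{C}_1$ with positive probability, since the mean number of hyperplanes of $\hat{X}$ missing $B_n(R)$ yet separating part of $\mathcal{C}_1$ from the origin is strictly positive; hence $\mathds{E}[V(\mathcal{C}_1(R,U))]>\mathds{E}[V(Z)]=1/\lambda$, and your upper bound is strictly larger than $1$, while the proposition claims exactly $1$ at $k=0$. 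So the inequality you need on the $\mathcal{C}_1$ side is false at $k=0$, and the same looseness persists for small $k>0$; no amount of care with the spherical cancellations will recover the constant. Separately, the normalization $\int\triangle_n1_P\,\dint\sigma^{n+1}=2(n+1)\lambda/\gamma^n$ you read off from Theorem~\ref{calka_typical} with $A=\mathcal{K}$ is inconsistent with the value $\frac{(n+1)\kappa_n}{2^n}\left(\frac{\kappa_{n-1}}{n\kappa_n}\right)^n$ that the paper uses together with \eqref{e: cell_int}, so that step should not be trusted as written. The information you throw away is precisely the factor $e^{-\Theta(\mathcal{F}_{[0,x]}^{B_n(r)})}$, which is what produces the critical value $f_n(0)=2\kappa_{n-1}/(n\kappa_n)$ and hence the claimed bound.
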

Next, we consider a sequence of these random vectors in increasing dimension $n$, and obtain the following result. 

\begin{theorem}\label{t:hyp_thresh}
For each $n$, let $X_n$ be a stationary and isotropic Poisson hyperplane mosaic with cell intensity $e^{n \rho_n}$. Assume $\lim_{n \to \infty} \rho_n = \rho \in \mathds{R}$. Let $Z_{0,n}$ be the zero cell of $X_n$, and define the random vectors $Y_n$ such that, conditioned on $X_n$,
\[Y_n \sim \mathrm{Uniform}(Z_{0,n} - c(Z_{0,n})).\]
Then, for all $R > e^{-\rho}\sqrt{\pi e/2}$,
\[\lim_{n \to \infty} \mathds{P}(|Y_n| \geq \sqrt{n} R) = 0,\]
and there is a $R_{\ell}$ such that $0 < R_{\ell} < e^{-\rho}\sqrt{\pi e /2}$ and for all $R < R_{\ell}$,
\[\lim_{n \to \infty} \mathds{P}(|Y_n| \leq \sqrt{n} R) =  0.\]
Also, for $R > e^{-\rho}\sqrt{\pi e / 2}$,
\begin{align*}
\limsup_{n \rightarrow \infty} \frac{1}{n} \ln \mathds{P}(|Y_n| \geq \sqrt{n} R) \leq  \rho + \frac{1}{2}\ln\left(\frac{2 e}{\pi}\right) + \ln R  -   e^{\rho} R \sqrt{\frac{2}{\pi e}},
\end{align*}
and for $ R < e^{-\rho}\sqrt{\pi e / 2}$,
\begin{align*}
\limsup_{n \rightarrow \infty} \frac{1}{n} \ln \mathds{P}(|Y_n| \leq \sqrt{n} R) \leq  \rho + \frac{1}{2}\ln\left(\frac{ \pi e}{2}\right) + \ln R -   e^{\rho} R\sqrt{\frac{2}{\pi e}}.
\end{align*}
\end{theorem}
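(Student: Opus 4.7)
The plan is to handle the upper and lower tails separately. For the upper tail ($R > e^{-\rho}\sqrt{\pi e/2}$), I apply Markov's inequality at order $\alpha n$:
$$\mathds{P}(|Y_n|\geq\sqrt{n}R)\leq\frac{\mathds{E}|Y_n|^{\alpha n}}{(\sqrt{n}R)^{\alpha n}},$$
substitute the moment bound of Proposition \ref{p:hyp_moments}, and apply Stirling's formula to $\Gamma((1+\alpha)n+1)/\Gamma(n+1)$ together with \eqref{e:vball} for $\kappa_n$. The right-hand side then reduces to a subexponential prefactor times $[(1+\alpha)^{1+\alpha}(\pi/(2e))^{\alpha/2}e^{-\alpha\rho_n}/R^\alpha]^n$. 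Setting the $\alpha$-derivative to zero yields $\alpha^\ast = e^\rho R\sqrt{2/(\pi e)}-1$, which is positive precisely when $R>e^{-\rho}\sqrt{\pi e/2}$; substituting $\alpha^\ast$ produces the claimed upper-tail limsup.

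For the lower tail, I start from the density identity $f_{Y_n}(x) = \lambda\mathds{P}(x\in Z)$ of Lemma \ref{X_density}, which gives
$$\mathds{P}(|Y_n|\leq\sqrt{n}R) = \lambda\omega_n\int_0^{\sqrt{n}R}s^{n-1}\mathds{P}(se_1\in Z)\,\dint s.$$
Calka's representation (Theorem \ref{calka_typical}) writes $Z=\mathcal{C}_1\cap\mathcal{C}_2$, and conditioning on the inradius $R$ and the tangent normals $(U_0,\ldots,U_n)$ gives
$$\mathds{P}(x\in Z) = \int_0^\infty 2\gamma e^{-2\gamma r}\,\mathds{P}(x\in\mathcal{C}_1\mid R=r)\,e^{-\gamma h(x,r)}\,\dint r,\qquad h(x,r):=\int_{\sS^{n-1}}(|\langle x,u\rangle|-r)_+\sigma_{n-1}(\dint u),$$
where the factor $e^{-\gamma h(x,r)}$ is the probability that no hyperplane of $\hat X$ missing $B_n(r)$ separates $x$ from $0$. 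Changing variables $r = a|x|$ and using the derivative bound $\frac{d}{da}(2a + \mathds{E}(|u_1|-a)_+) = 2-\mathds{P}(|u_1|>a)\geq 1$, one obtains $\int_0^\infty 2\gamma e^{-2\gamma r - \gamma h(x,r)}\,\dint r \leq 2e^{-\gamma|x|\mathds{E}|u_1|}$. Combining $\mathds{E}|u_1|\sim\sqrt{2/(\pi n)}$ with the asymptotic $\gamma\sim e^{\rho_n-1/2}n$ (derivable from \eqref{e: cell_int} and \eqref{e:vball}) gives the key rate $\gamma|x|\mathds{E}|u_1|\sim ne^{\rho_n}R\sqrt{2/(\pi e)}$ at $|x|=\sqrt{n}R$. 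The $\mathcal{C}_1$-factor contributes an additional factor of order $2^{-n}$: for the typical magnitude $r=O(1/\gamma)$ the constraint $\langle x,U_j\rangle\leq r$ for all $j$ forces the normals into a cap barely larger than a hemisphere, and under the joint density proportional to $\triangle_n(u_0,\ldots,u_n)1_P(u_0,\ldots,u_n)$ this event has mass of order $2^{-n}$. Integrating in $s$ and applying Stirling to $\omega_n\Gamma(n)$ produces the claimed lower-tail limsup.

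The threshold claims then follow by analyzing the limsup bounds as functions of $R$. Both $R\mapsto\rho+\tfrac{1}{2}\ln(2e/\pi)+\ln R - e^\rho R\sqrt{2/(\pi e)}$ and its lower-tail analogue with $\tfrac{1}{2}\ln(\pi e/2)$ are concave with the same critical point $R^\ast=e^{-\rho}\sqrt{\pi e/2}$. The upper-tail function achieves maximum value $0$ at $R^\ast$ and is strictly negative elsewhere, so $\mathds{P}(|Y_n|\geq\sqrt{n}R)\to 0$ for all $R > R^\ast$. The lower-tail function achieves maximum $\ln(\pi/2)>0$ at $R^\ast$ but tends to $-\infty$ as $R\to 0^+$, so one takes $R_\ell$ to be the smaller positive root; then $R_\ell\in(0,R^\ast)$ and $\mathds{P}(|Y_n|\leq\sqrt{n}R)\to 0$ for all $R<R_\ell$.

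The technical crux is the $2^{-n}$ estimate on $\mathds{P}(x\in\mathcal{C}_1\mid R=r)$: without this factor the lower-tail bound would contain $\tfrac{1}{2}\ln(2\pi e)$ in place of the desired $\tfrac{1}{2}\ln(\pi e/2)$, losing exactly $\ln 2$. Making this rigorous requires delicate analysis of the joint density on $(\sS^{n-1})^{n+1}$ proportional to $\triangle_n(u_0,\ldots,u_n)1_P(u_0,\ldots,u_n)$ — balancing the simplex-volume weight $\triangle_n$ (which favors spread configurations) against the indicator $1_P$ (which imposes the spanning constraint on the convex hull) and the cap condition $\langle x, U_j\rangle\leq r$ for all $j$.
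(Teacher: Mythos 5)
Your upper-tail argument is a valid alternative. Applying Markov at order $\alpha n$ to the moment bound of Proposition~\ref{p:hyp_moments}, passing to Stirling, and optimizing over $\alpha$ reproduces $\alpha^\ast = e^\rho R\sqrt{2/(\pi e)} - 1$ and the value $\ln c - c + 1$ with $c = e^\rho R\sqrt{2/(\pi e)}$, which is exactly the paper's rate. The paper instead applies the Laplace-method asymptotic of Lemma~\ref{A:gamma} directly to the incomplete-gamma bound of Lemma~\ref{l:bound}. The two routes carry the same information (an optimal Chernoff bound at a single scale $\alpha n$ recovers the exponential tail rate), so this part is fine, just packaged differently.

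The lower tail, however, has a genuine gap, and you have put your finger on exactly the wrong place for it to be fixed. You observe that if one drops the $\mathcal{C}_1$-factor and bounds only $\int_0^\infty 2\gamma e^{-2\gamma r - \gamma h(x,r)}\dint r \le 2e^{-\gamma|x|\mathds{E}|u_1|}$, the resulting rate carries $\tfrac{1}{2}\ln(2\pi e)$ in place of the claimed $\tfrac{1}{2}\ln(\pi e/2)$, so you propose to recover the missing $\ln 2$ from $\mathds{P}(x\in\mathcal{C}_1\mid R=r)=O(2^{-n})$. But you never prove this estimate, and it is not even true uniformly in $r$ (as $r\to\infty$ the probability tends to $1$), so it would have to be established only for the relevant scale of $r$ and then carried through the $r$-integral jointly with $e^{-\gamma h(x,r)}$ — which is precisely the ``delicate analysis'' you concede is missing. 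That is the missing idea, and the claimed limsup bound does not follow without it.

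What the paper actually does is structurally different and avoids estimating $\mathds{P}(x\in\mathcal{C}_1\mid R=r)$ altogether. In Lemma~\ref{l:bound}, after the change of variables $t = r/|x|$ and Fubini, the bound is $\mathds{P}(|Y|\le R) \le \tfrac{n\kappa_n}{n+1}\,I_n(\infty)\int_0^\infty \Gamma_\ell(n+1,\gamma f_n(t)R)/f_n(t)^{n+1}\,\dint t$, where the indicator $\prod_i 1_{\{\langle e_n,u_i\rangle\le t\}}$ has simply been dropped ($\mathds{P}(x\in\mathcal{C}_1\mid\cdot)\le 1$) and $I_n(\infty) = \int\triangle_n 1_P\prod\sigma_{n-1}(\dint u_i) = \tfrac{\kappa_n(n+1)}{2^n}(\kappa_{n-1}/(n\kappa_n))^n$ is the \emph{unnormalized} mass of the Calka density. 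The exponentially small factor then comes from two sources that you do not isolate: the $2^{-n}$ inside $I_n(\infty)$, and the $2^{-(n+1)}$ hiding inside $f_n(0)^{-(n+1)} = (n\kappa_n/(2\kappa_{n-1}))^{n+1}$ after one shows that $h_n(t) = \Gamma_\ell(n+1,\gamma f_n(t)R)/f_n(t)^{n+1}$ is decreasing (the paper's inequality~\eqref{e:ineq1}) and hence $\int_0^1 h_n(t)\dint t \le h_n(0)$. That monotonicity step is the real replacement for the $2^{-n}$ bound you are hoping to prove on $\mathds{P}(x\in\mathcal{C}_1)$; your version, which bounds the $r$-integral first by its value at $t=0$ and \emph{then} integrates over $s$, does not see the extra $1/f_n(t)^{n+1}$ decay in $t$ and hence cannot recover the needed constant without the unproven $\mathcal{C}_1$ estimate.

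The threshold analysis at the end is fine conditional on the two limsup bounds, and matches the reasoning of the Remark following the theorem. But as written, the lower-tail limsup is not established.
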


\begin{rem}
The lower bound $R_{\ell}$ satisfies
\[\rho + \frac{1}{2}\ln\left(2 \pi e\right) + \ln R_{\ell} -   e^{\rho} R_{\ell}\sqrt{\frac{2}{\pi e}} - \ln 2 = 0.\]
For all $R > 0$,
$\rho + \frac{1}{2}\ln\left(2 \pi e\right) + \ln R -   e^{\rho} R \sqrt{\frac{2}{\pi e}} - \ln 2 < \rho + \frac{1}{2}\ln\left(2 \pi e\right) + \ln R$,
and thus $ \rho + \frac{1}{2}\ln\left(2 \pi e\right) + \ln R_{\ell} > 0$. This implies that 
\[ R_{\ell} > e^{-\rho}(2\pi e)^{-\frac{1}{2}}.\]
Comparing this with Theorem \ref{t:vor_thresh} shows that the vector $Y_n$ chosen with respect to the Poisson-Voronoi zero cell will have a smaller norm in high dimensions than the vector $Y_n$ chosen with respect to the zero cell of the Poisson hyperplane mosaic.
\end{rem}

\begin{rem}
The above result can be generalized to different scalings for the cell intensity and the norm in the following way. Let the cell intensity be equal to $e^{n \rho_n} n^{n\alpha}$ for each $n$ 
for some $\alpha \in \R$ and such that $\lim_{n \to \infty} \rho_n = \rho \in \R$. By \eqref{e: cell_int} and \eqref{e:vball}, this implies that the scaling for the intensity of hyperplanes satisfies
\[\gamma_n = \frac{n \kappa_n}{\kappa_{n-1}}\left(\frac{e^{n\rho_n}n^{n\alpha}}{\kappa_n}\right)^{1/n} \sim \frac{e^{\rho}}{\sqrt{e}}n^{\alpha + 1}, \, \text{ as } n \to \infty.\]
The proof and the conclusions of Theorem \ref{t:hyp_thresh} then follow with the 
probabilities 
\[\mathds{P}(|Y_n| \leq R n^{\frac{1}{2} - \alpha}) \,  \text{ and } \, \mathds{P}(|Y_n| \geq R n^{\frac{1}{2} - \alpha}).\]
\end{rem}

Before proving the theorem, recall the following special functions. The beta function is defined by
\begin{equation*}
B(x,y) := \int_0^1 t^{x-1}(1-t)^{y-1} \dint t.
\end{equation*}
The incomplete beta function is defined as $B(x; a,b) := \int_0^x t^{a-1}(1-t)^{b-1} \dint t$, and the regularized incomplete beta function is
\begin{align*}
I_x(a,b) := \frac{B(x; a,b)}{B(a,b)}.
\end{align*}

Recall that the Gamma function is defined by $\Gamma(x) := \int_0^{\infty} t^{x - 1} e^{-t}\dint t$,
and we define the upper and lower incomplete gamma functions by
\begin{align*}
\Gamma_u(x, R) := \int_R^{\infty} t^{x - 1} e^{-t}\dint t \, \text{ and } \, \Gamma_{\ell}(x, R) := \int_0^{R}t^{x - 1} e^{-t}\dint t,
\end{align*}
respectively. The following series of lemmas are needed before proving the results.

\begin{lemma}\label{l:mean_hit}
Let $\hat{X}$ be a stationary and isotropic Poisson hyperplane process in $\R^n$ with intensity $\gamma$. Let $[0,x]$ denote the line segment between $0$ and the point $x$. Then, for $r \geq 0$,
\[ \Theta\left(\mathcal{F}_{[0,x]}^{B_n(r)}\right) 
= \gamma |x| \left[ \frac{2\kappa_{n-1}}{n \kappa_n }\left(1-\frac{r^2}{|x|^2}\right)^{\frac{n-1}{2}} - \frac{r}{|x|} I_{1 - \frac{r^2}{|x|^2}}\left(\frac{n-1}{2}, \frac{1}{2}\right) \right]1_{\{|x| \geq r\}}.\]
\end{lemma}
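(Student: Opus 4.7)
The plan is to apply Theorem~\ref{t:hyp_int} to write the intensity measure as a double integral over the normal direction $u \in \sS^{n-1}$ and the offset $\tau \in \R$, and then to reduce the integrand to an explicit function of $\langle u, x\rangle$ using two elementary geometric facts. A hyperplane $H(u,\tau)$ has distance $|\tau|$ from the origin, so $H(u,\tau) \cap B_n(r) = \emptyset$ iff $|\tau| > r$; and $H(u,\tau)$ meets $[0,x]$ iff there is some $t \in [0,1]$ with $t \langle u, x\rangle = \tau$, so the admissible $\tau$ form a closed interval with endpoints $0$ and $\langle u, x\rangle$, of length $|\langle u, x\rangle|$. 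Intersecting the two conditions yields, for each $u$, a $\tau$-set of Lebesgue measure $(|\langle u, x\rangle| - r)^+$. Theorem~\ref{t:hyp_int} therefore gives
\[\Theta(\mathcal{F}_{[0,x]}^{B_n(r)}) = \gamma \int_{\sS^{n-1}} (|\langle u, x\rangle| - r)^+ \, \sigma_{n-1}(\dint u),\]
which automatically vanishes when $|x| < r$, producing the indicator $1_{\{|x| \geq r\}}$.

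Next I would invoke the standard identity $\int_{\sS^{n-1}} f(\langle u, e\rangle)\sigma_{n-1}(\dint u) = \frac{\omega_{n-1}}{\omega_n}\int_{-1}^{1} f(t)(1-t^2)^{(n-3)/2}\dint t$ with $e = x/|x|$ and $f(t) = (|x||t| - r)^+$. The symmetry $t \mapsto -t$ halves the domain to $t \in [r/|x|, 1]$, reducing the problem (when $|x|\geq r$) to
\[\frac{2\omega_{n-1}}{\omega_n}\int_{r/|x|}^1 (|x|t - r)(1-t^2)^{(n-3)/2}\dint t,\]
which I would split into two pieces. For the $|x|t$ term, the substitution $u = 1 - t^2$ evaluates the integral in closed form to $\frac{1}{n-1}(1-r^2/|x|^2)^{(n-1)/2}$; for the $-r$ term, the substitution $s = 1 - t^2$ rewrites it as $\frac{1}{2} B(1 - r^2/|x|^2; \frac{n-1}{2}, \frac{1}{2})$, half of the incomplete Beta function evaluated at $1 - r^2/|x|^2$.

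The last step is to simplify the numerical constants so they match the stated expression. Using $\omega_k = k\kappa_k = 2\pi^{k/2}/\Gamma(k/2)$ one gets $\frac{2\omega_{n-1}}{(n-1)\omega_n} = \frac{2\kappa_{n-1}}{n\kappa_n}$, which, after pulling out $\gamma|x|$, turns the first piece into the claimed $\frac{2\kappa_{n-1}}{n\kappa_n}(1-r^2/|x|^2)^{(n-1)/2}$. For the second piece, the identity $B(\frac{n-1}{2}, \frac{1}{2}) = \sqrt{\pi}\,\Gamma(\frac{n-1}{2})/\Gamma(\frac{n}{2})$ combined with the same Gamma-function expression for $\omega_{n-1}/\omega_n$ produces the clean cancellation $\frac{\omega_{n-1}}{\omega_n} B(\frac{n-1}{2},\frac{1}{2}) = 1$, so the incomplete Beta normalizes exactly into the regularized form $I_{1 - r^2/|x|^2}(\frac{n-1}{2}, \frac{1}{2})$ with coefficient $\gamma r$; factoring out $\gamma|x|$ gives the stated $r/|x|$. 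The main obstacle is nothing deep: it is just carefully tracking the Gamma and Beta constants so that everything collapses to $2\kappa_{n-1}/(n\kappa_n)$ and the regularized incomplete Beta, with the underlying geometry immediate.
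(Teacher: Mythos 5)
Your proposal is correct and follows essentially the same route as the paper: apply Theorem~\ref{t:hyp_int}, collapse the $\tau$-integral to $(|\langle u,x\rangle|-r)^+$, exploit rotational invariance to reduce to a one-dimensional integral in $t=\langle u, x/|x|\rangle$, and split it into the piece that evaluates in closed form and the piece that normalizes to the regularized incomplete Beta function. The only cosmetic difference is that you derive the spherical-cap area by substitution whereas the paper cites it directly; the constant bookkeeping matches.
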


\begin{proof}
Note that if $r > |x|$, a hyperplane cannot hit $[0,x]$ and not hit the open ball $B_n(r)$ at the same time and thus $\Theta\left(\mathcal{F}_{[0,x]}^{B_n(r)}\right)$ is zero. If $r \leq |x|$, then by Theorem \ref{t:hyp_int},
\begin{align*}
\Theta\left(\mathcal{F}_{[0,x]}^{B_n(r)}\right) &=  \gamma \int_{\sS^{n-1}} \int_{-\infty}^{\infty} 1_{\{H(u,t) \cap [0,x] \neq 0\}}1_{\{H(u,t) \cap B_n(r) = \emptyset\}} \dint t \sigma_{n-1}(\dint u) \\
&= 2\gamma \int_{\sS^{n-1}} \int_{0}^{\infty} 1_{\{r \leq t < \langle x, u \rangle\}}1_{\{r \leq \langle x,u \rangle\}} \dint t \sigma_{n-1}(\dint u) \\
&= 2\gamma \int_{\{v \in \sS^{n-1}: \langle v, x \rangle \geq r\}} ( \langle x, u \rangle - r) \sigma_{n-1}(\dint u) \\ 
&= 2\gamma |x|  \int_{\{v \in \sS^{n-1}: \langle v, x \rangle \geq r\}} \bigg< \frac{x}{|x|}, u \bigg>  \sigma_{n-1}(\dint u) - 2r\gamma \sigma_{n-1}(\{v \in \sS^{n-1}: \langle v, x \rangle \geq r\}).
\end{align*}
To compute the first integral, first note that the integral does not depend on the direction of $x$, only on the norm $|x|$. We can then assume $x = |x|e_n$, where $e_n = (0, ..., 0,1)$, and
\begin{align*}
&\int_{\{v \in \sS^{n-1}: \langle v, |x|e_n \rangle \geq r\}} \langle e_n, u \rangle  \sigma_{n-1}(\dint u) =\int_{\{v \in \sS^{n-1}: v_n \geq \frac{r}{|x|}\}} u_n \sigma_{n-1}(\dint u) \\
&\qquad = \frac{\omega_{n-2}}{\omega_{n-1}}\int_{\frac{r}{|x|}}^1 \int_{\sS^{n-2}} t  (1 - t^2)^{\frac{n-3}{2}} \sigma_{n-2}(\dint u) \dint t\\
&\qquad = \frac{(n-1)\kappa_{n-1}}{n \kappa_n }\int_{\frac{r}{|x|}}^1 t  (1 - t^2)^{\frac{n-3}{2}}  \dint t = \frac{(n-1)\kappa_{n-1}}{n \kappa_n } \int_{0}^{1-\frac{r^2}{|x|^2}} \frac{1}{2}s^{\frac{n-3}{2}} \dint s \\
&\qquad =\frac{(n-1)\kappa_{n-1}}{2n \kappa_n } \frac{2}{n-1}\left(1-\frac{r^2}{|x|^2}\right)^{\frac{n-1}{2}} = \frac{\kappa_{n-1}}{n \kappa_n } \left(1-\frac{r^2}{|x|^2}\right)^{\frac{n-1}{2}},
\end{align*}
where the second equality follows from \cite[(1.41)]{Muller}.
The fractional area of a spherical cap is given by 
\begin{align*}
\sigma_{n-1}(\{v \in \sS^{n-1}: \langle v, x \rangle \geq r\}) = \frac{1}{2}I_{1 - \frac{r^2}{|x|^2}}\left(\frac{n-1}{2}, \frac{1}{2}\right).
\end{align*}
Then,
\begin{align*}
\Theta\left(\mathcal{F}_{[0,x]}^{B(r)}\right) &=\left[ 2 \gamma |x|\frac{\kappa_{n-1}}{n \kappa_n }\left(1-\frac{r^2}{|x|^2}\right)^{\frac{n-1}{2}} - r\gamma I_{1 - \frac{r^2}{|x|^2}}\left(\frac{n-1}{2}, \frac{1}{2}\right) \right]1_{\{|x| \geq r\}} \\
&= \gamma |x| \left[ \frac{2\kappa_{n-1}}{n \kappa_n }\left(1-\frac{r^2}{|x|^2}\right)^{\frac{n-1}{2}} - \frac{r}{|x|} I_{1 - \frac{r^2}{|x|^2}}\left(\frac{n-1}{2}, \frac{1}{2}\right) \right]1_{\{|x| \geq r\}}.
\end{align*}

\end{proof}

\begin{lemma}\label{l:bound}
Let $Z_0$ be the zero cell of the random mosaic induced by a stationary and isotropic Poisson hyperplane process $\hat{X}$ with intensity $\gamma$ in $\R^n$. Let $Y$ be the random vector such that, conditioned on $Z_0$, $Y  \sim \mathrm{Uniform}(Z_0 - c(Z_0))$. Then, for $R > 0$,
\begin{align*}
\mathds{P}(|Y| \geq R) &\leq \frac{\Gamma_{u}\left(n + 1, 2\gamma R \frac{\kappa_{n-1}}{n \kappa_n}\right)}{\Gamma(n+1)},
\end{align*}
and 
\begin{align*}
\mathds{P}(|Y| \leq R) & \leq \frac{n \kappa_n^2 }{4^{n}}\left(\frac{n \kappa_n}{2\kappa_{n-1}}\right) \left[\Gamma_{\ell}\left(n + 1, 2\gamma R \frac{\kappa_{n-1}}{n \kappa_n}\right) + \Gamma(n)\left(\frac{\kappa_{n-1}}{n\kappa_n}\right)^{n+1} \right].
\end{align*}
\end{lemma}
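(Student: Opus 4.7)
The strategy is to derive an explicit integral representation for $f_Y$ by combining Lemma \ref{X_density} with the typical cell representation of Theorem \ref{calka_typical} and the hitting-measure formula of Lemma \ref{l:mean_hit}, then estimate the resulting integrals. By Lemma \ref{X_density} and $\mathds{E}[V(Z)]=1/\lambda$, one has $f_Y(x)=\lambda\,\mathds{P}(x\in Z)$. Taking $A=\{K\in\mathcal{K}':x\in K\}$ in Theorem \ref{calka_typical} and using the Poisson property of $\hat{X}\cap\mathcal{F}^{B_n(r)}$, namely
$$\mathds{P}\!\left(x\in\bigcap_{H\in\hat{X}\cap\mathcal{F}^{B_n(r)}}H_0^+\right)=e^{-\Theta(\mathcal{F}_{[0,x]}^{B_n(r)})}$$
with the exponent given explicitly by Lemma \ref{l:mean_hit}, one obtains
$$f_Y(x)=\frac{\gamma^{n+1}}{n+1}\int_0^\infty\!\int_{(\sS^{n-1})^{n+1}}\! e^{-2\gamma r}\,e^{-\Theta(\mathcal{F}_{[0,x]}^{B_n(r)})}\,1_{\{\max_j\langle x,u_j\rangle\leq r\}}\,\triangle_n(u_0,\ldots,u_n)\,1_P\,\sigma_{n-1}(\dint u_0)\cdots\sigma_{n-1}(\dint u_n)\,\dint r.$$

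For the bound on $\mathds{P}(|Y|\geq R)$, I would integrate $f_Y$ over $\{|x|\geq R\}$ and apply $1_{\{\max_j\langle x,u_j\rangle\leq r\}}\leq 1$ to decouple the spherical integral. The constant $\int\triangle_n\,1_P\,\sigma_{n-1}(\dint u_0)\cdots\sigma_{n-1}(\dint u_n)$ is evaluated by taking $A=\mathcal{K}'$ in Theorem \ref{calka_typical} (which forces $\mathds{Q}(\mathcal{K}')=1$) and simplifying via $\mathds{E}[V(Z)]=1/\lambda$ and \eqref{e: cell_int}. Setting $a:=2\gamma\kappa_{n-1}/(n\kappa_n)$, Lemma \ref{l:mean_hit} together with the pointwise bound $\Theta(\mathcal{F}_{[0,x]}^{B_n(r)})\geq(a|x|-2\gamma r)_+$, which follows from $\Theta(\mathcal{F}_{[0,x]}^{B_n(r)})\geq \Theta(\mathcal{F}_{[0,x]})-\Theta(\mathcal{F}_{B_n(r)})$, allows the inner $r$-integration to be evaluated explicitly. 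Passing to spherical coordinates reduces the outer integral to a one-dimensional radial integral against $s^{n-1}e^{-as}(1+as)$, which evaluates to a combination of upper incomplete Gamma functions; identifying the leading term and cleaning up constants via \eqref{e: cell_int} produces the claimed bound $\Gamma_u(n+1,aR)/\Gamma(n+1)$.

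For the bound on $\mathds{P}(|Y|\leq R)$, the same procedure applies with the outer integration restricted to $\{|x|\leq R\}$. The key step is to split the inner $r$-integration at $r=|x|$, since $\Theta(\mathcal{F}_{[0,x]}^{B_n(r)})$ vanishes for $r\geq|x|$ by Lemma \ref{l:mean_hit}. In the regime $r\in[0,|x|]$, the factor $e^{-\Theta}$ combined with the radial integration produces the lower incomplete Gamma function $\Gamma_\ell(n+1,aR)$. In the regime $r\in[|x|,\infty)$, the exponent $\Theta$ is zero and the $r$-integration reduces to a pure decaying exponential in $|x|$, which after integration over $\{|x|\leq R\}$ yields the $R$-independent residual captured by $\Gamma(n)(\kappa_{n-1}/(n\kappa_n))^{n+1}$.

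The main obstacle is the precise bookkeeping of geometric constants. Numerous factors involving $\gamma$, $n\kappa_n$, $\kappa_{n-1}$, and $a=2\gamma\kappa_{n-1}/(n\kappa_n)$ appear throughout, and matching the sharp leading constants — namely $1/\Gamma(n+1)$ in the first bound and $(n\kappa_n^2/4^n)(n\kappa_n/(2\kappa_{n-1}))$ in the second — requires careful algebraic manipulation via \eqref{e: cell_int}. Should the coarse estimate $1_{\{\max_j\langle x,u_j\rangle\leq r\}}\leq 1$ prove too loose to yield the stated constants, a finer analysis exploiting the simplex-containment constraint $0\in\mathrm{conv}(u_0,\ldots,u_n)$ — which restricts the $u_j$'s and in particular forces some $\langle x,u_j\rangle\leq 0$ — would be necessary to tighten the spherical integral.
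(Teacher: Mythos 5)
Your starting point — combining Lemma \ref{X_density}, Theorem \ref{calka_typical}, the Poisson void probability, and Lemma \ref{l:mean_hit} to obtain an integral representation for $f_Y$ — is exactly how the paper begins. The split of the $r$-integral at $r=|x|$ and the source of the residual $\Gamma(n)(\kappa_{n-1}/(n\kappa_n))^{n+1}$ term in the $\mathds{P}(|Y|\leq R)$ bound are also correctly identified; after dropping the simplex indicators (the paper evaluates $\int\triangle_n 1_P\prod\sigma_{n-1}(\dint u_i)$ exactly via \cite{Calka_typ1}), the lower-bound $\Theta(\mathcal{F}_{[0,x]}^{B_n(r)})\geq(2\gamma\kappa_{n-1}|x|/(n\kappa_n)-2\gamma r)_+$ you propose does produce a bound comparable to the stated one, though the paper instead proves the quantity $h_n(t)=\Gamma_\ell(n+1,\gamma f_n(t)R)/f_n(t)^{n+1}$ is decreasing on $[0,1]$ via an explicit derivative computation.

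The genuine gap is in your treatment of $\mathds{P}(|Y|\geq R)$. Dropping the indicators $1_{\{\langle x,u_j\rangle\leq r\}}$ and replacing $\Theta$ by the coarse lower bound $(a|x|-2\gamma r)_+$ (with $a:=2\gamma\kappa_{n-1}/(n\kappa_n)$) leads, after integrating out $r$ and passing to polar coordinates, to an upper bound of the form $\frac{n\kappa_n^2}{2\cdot 4^n}\bigl[\Gamma_u(n+1,aR)+\Gamma_u(n,aR)\bigr]$. This is strictly weaker than the stated $\Gamma_u(n+1,aR)/\Gamma(n+1)$; already for $n=2$, $R=0$, your bound gives roughly $1.85$ while the stated one gives exactly $1$, and the prefactor deficit grows like $\sqrt{n}(\pi/2)^n$ by Stirling. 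You anticipate that the coarse indicator estimate might be too loose and suggest a ``finer analysis of the simplex constraint,'' but that is not how the paper closes the gap. The paper never drops the indicators for this inequality. Instead, it carries the exact expression for $\Theta$ into a function $f_n(t)$ (with $t=r/|x|$), reduces everything to the exact identity
\[
\mathds{P}(|Y|\geq R)=\frac{n\kappa_n}{n+1}\int_0^\infty\frac{\Gamma_u(n+1,\gamma f_n(t)R)}{f_n(t)^{n+1}}\,J(t)\,\dint t,\quad J(t):=\int_{(\sS^{n-1})^{n+1}}\triangle_n 1_P\prod_{i}1_{\{\langle e_n,u_i\rangle\leq t\}}\sigma_{n-1}(\dint u_i),
\]
observes that $f_n$ attains its minimum at $t=0$ with $f_n(0)=2\kappa_{n-1}/(n\kappa_n)=a/\gamma$, bounds only $\Gamma_u(n+1,\gamma f_n(t)R)\leq\Gamma_u(n+1,aR)$, and then recognizes that the remaining factor (with $\Gamma(n+1)$ in place of $\Gamma_u$) is precisely the right-hand side of the identity at $R=0$, hence equal to $1$. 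That normalization trick is the missing idea: it is what yields the clean prefactor $1/\Gamma(n+1)$, and your pointwise replacement of $\Theta$ cannot recover it because it discards exactly the structure that makes the normalization come out to $1$.
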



\begin{proof}

By Lemma \ref{X_density}, the density of $Y$ is $f_Y(x) = \frac{\mathds{P}(x \in Z)}{\mathds{E}\left[V(Z) \right]}$.
Using the representation of $Z$ in Theorem \ref{calka_typical}, 
\begin{align*}
\mathds{P}(x \in Z) &= \frac{\mathds{E}[V(Z)] \gamma^{n+1}}{n+1} \int_{0}^{\infty}  \int_{(\sS^{n-1})^{n+1}}  e^{-2\gamma r} \mathds{P}\left( x \in \bigcap_{H \in \hat{X} \cap \mathcal{F}^{B_n(r)}} H_0^+ \cap \bigcap_{j=0}^n H^{-}(u_j, r) \right) \\
& \qquad \cdot \triangle_n(u_0, ..., u_n)1_P(u_0, ..., u_n)\prod_{i=0}^n \sigma_{n-1}(\dint u_i) \dint r.
\end{align*}
First, we see that
\begin{align*}
\mathds{P}\left( x \in \bigcap_{H \in \hat{X} \cap \mathcal{F}^{B_n(r)}} H_0^+ \cap \bigcap_{j=0}^n H^{-}(u_j, r)\right) &=\prod_{j=0}^n 1_{\{x \in H^{-}(u_j, r)\}} \mathds{P}\left(\hat{X} \left(\mathcal{F}_{[0,x]}^{B_n(r)}\right) = 0\right) \\
&=  \prod_{j=0}^n 1_{\{x \in H^{-}(u_j, r)\}}e^{-\Theta\left(\mathcal{F}_{[0,x]}^{B_n(r)}\right)}.
\end{align*}
Then, 
\begin{align*}
&\mathds{P}(|Y| \geq R)  = \int_{B_n(R)^c} \frac{\mathds{P}(x \in Z)}{\mathds{E}[V(Z)]} \dint x \\
&= \int_{B_n(R)^c} \frac{\gamma^{n+1}}{(n+1) }\int_{0}^{\infty}  \int_{(\sS^{n-1})^{n+1}}  e^{-2\gamma r} \mathds{P}\left( x \in \bigcap_{H \in \hat{X} \cap \mathcal{F}^{B_n(r)}} H_0^+ \cap \bigcap_{j=0}^n H^{-}(u_j, r) \right) \\
& \qquad \qquad \cdot \triangle_n(u_0, ..., u_n)1_P(u_0, ..., u_n)\prod_{i=0}^n \sigma_{n-1}(\dint u_i) \dint r \dint x \\
&=\frac{\gamma^{n+1}}{n+1}  \int_{B_n(R)^c} \int_{0}^{\infty}  \int_{(\sS^{n-1})^{n+1}}  e^{-2\gamma r} e^{-\Theta\left(\mathcal{F}_{[0,x]}^{B_n(r)}\right)} \triangle_n1_P\prod_{i=0}^n 1_{\{x \in H^{-}(u_i, r)\}}\sigma_{n-1}(\dint u_i) \dint r \dint x\\
&= \frac{\gamma^{n+1}}{n+1}  \int_{B_n(R)^c}\int_{0}^{\infty} e^{-2\gamma r}  e^{-\Theta\left(\mathcal{F}_{[0,x]}^{B_n(r)}\right)}  \int_{(\sS^{n-1})^{n+1}}  \triangle_n 1_P \prod_{i=0}^n 1_{\{\langle x, u_i \rangle \leq r\}} \sigma_{n-1}(\dint u_i) \dint r \dint x. 
\end{align*}
Making the change of variables $t = \frac{r}{|x|}$, observing that the innermost integral does not depend on the direction of $x$, and using Fubini's Theorem gives
\begin{align*}
&\mathds{P}(|Y| \geq R) \\
&=  \frac{\gamma^{n+1}}{n+1}  \int_{B_n(R)^c} |x| \int_{0}^{\infty}  e^{-2\gamma |x| t}  e^{-\Theta\left(\mathcal{F}_{[0,x]}^{B_n(|x| t )}\right)}  \int_{(\sS^{n-1})^{n+1}} \triangle_n 1_P \prod_{i=0}^n 1_{\{\langle e_n, u_i \rangle \leq t\}} \sigma_{n-1}(\dint u_i) \dint t \dint x \\
&=   \frac{\gamma^{n+1}}{n+1} \int_{0}^{\infty} \int_{B_n(R)^c}  |x|  e^{-2\gamma |x| t}  e^{-\Theta\left(\mathcal{F}_{[0,x]}^{B_n(|x| t )}\right)} \dint x  \int_{(\sS^{n-1})^{n+1}} \triangle_n 1_P \prod_{i=0}^n 1_{\{\langle e_n, u_i \rangle \leq t\}} \sigma_{n-1}(\dint u_i) \dint t.
\end{align*}
By Lemma \ref{l:mean_hit} and a change to polar coordinates,
\begin{align*}
& \int_{B_n(R)^c} |x|  e^{-2\gamma |x| t}  e^{-\Theta\left(\mathcal{F}_{[0,x]}^{B_n(|x| t )}\right)} \dint x \\
&\qquad = \int_{B_n(R)^c} |x|  e^{ -2\gamma |x| t - \gamma |x| \left[ \frac{2\kappa_{n-1}}{n \kappa_n }\left(1-t^2\right)^{\frac{n-1}{2}} - t I_{(1 -t^2)}\left(\frac{n-1}{2}, \frac{1}{2}\right) \right]1_{\{t \leq 1\}}  } \dint x  \\
&\qquad =n \kappa_n\int_{R}^{\infty} r^{n}  e^{-\gamma r \left[2t + \left(\frac{2\kappa_{n-1}}{n \kappa_n} \left(1-t^2\right)^{\frac{n-1}{2}} -  t I_{1 -  t^2 }\left(\tfrac{n-1}{2}, \tfrac{1}{2} \right) \right)1_{\{t \leq 1\}}\right] } \dint r.
\end{align*}
Now, using the identity $I_{1-x}(a,b) = 1 - I_x(b,a)$, 
\begin{align*}
\int_{B_n(R)^c} |x|  e^{-2\gamma |x| t}  e^{-\Theta\left(\mathcal{F}_{[0,x]}^{B_n(|x| t )}\right)} \dint x &=  n \kappa_n \int_{R}^{\infty} r^n e^{-\gamma r f_n(t) } \dint x,
\end{align*}
where
\begin{align*} 
f_n(t) &:= \begin{cases}  t + \frac{2\kappa_{n-1}}{n \kappa_n} \left(1-t^2\right)^{\frac{n-1}{2}} + t I_{t^2 }\left(\tfrac{1}{2},\tfrac{n-1}{2} \right) , & 0 \leq t \leq 1 \\ 2t, & t \geq 1. \end{cases}
\end{align*}
Note that $f_n$ attains its minimum at $t = 0$ and that
\begin{equation}\label{e:f_zero}
f_n(0)=  \frac{2\kappa_{n-1}}{n\kappa_n}.
\end{equation}
Then, by the change of variables $y = \gamma f_n(t) r$,
\begin{align*} 
n \kappa_n\int_{R}^{\infty} r^{n} e^{- \gamma f_n(t) r} \dint r 
&= \frac{n \kappa_n}{ \left(\gamma f_n(t) \right)^{n+1}}\int_{\gamma f_n(t) R}^{\infty} y^{n} e^{-y} \dint y  = \frac{n \kappa_n}{ \left(\gamma f_n(t) \right)^{n+1}} \Gamma_u(n + 1, \gamma f_n(t) R).
\end{align*} 
This gives us that
\begin{align}\label{e:Ycdf_hyp}
&\mathds{P}(|Y| \geq R) = \frac{n \kappa_n }{n+1} \int_{0}^{\infty} \frac{\Gamma_u(n + 1, \gamma f_n(t) R)}{ f_n(t)^{n+1}} \int_{(\sS^{n-1})^{n+1}}  \triangle_n 1_P \prod_{i=0}^n 1_{\{\langle e_n, u_i \rangle \leq t\}}\sigma_{n-1}(\dint u_i) \dint t. 
\end{align}

Since the upper incomplete gamma function is decreasing in its second argument, for all $t \geq 0$, 
\[\Gamma_u(n+1, \gamma f_n(t) R) \leq \Gamma_u\left(n+1, \gamma  \frac{2\kappa_{n-1}}{n\kappa_n} R\right),\]
where we have used  \eqref{e:f_zero}. This gives the upper bound
\begin{align*}
&\mathds{P}(|Y| \geq R) \leq \frac{\Gamma_u(n + 1, \gamma  \frac{2\kappa_{n-1}}{n\kappa_n} R)}{\Gamma(n+1)} \\
& \qquad \cdot \left[  \frac{ n \kappa_n }{n+1} \int_{0}^{\infty} \frac{\Gamma(n+1)}{f_n(t)^{n+1}}\left( \int_{(\sS^{n-1})^{n+1}}  \triangle_n 1_P \prod_{i=0}^n 1_{\{\langle e_n, u_i \rangle \leq t\}} \sigma_{n-1}(\dint u_i)\right) \dint t \right].
\end{align*}
The term in the parentheses is the right hand side of \eqref{e:Ycdf_hyp} for $R = 0$, 
and is thus equal to 1. Hence, 
\begin{align*}
\mathds{P}(|Y| \geq R) &\leq  \frac{\Gamma_u\left(n + 1, \gamma R \frac{2\kappa_{n-1}}{n \kappa_n}\right)}{\Gamma(n+1)}.
\end{align*}

For the upper bound of $\mathds{P}(|Y| \leq R)$, we follow a similar procedure up to the equality
\begin{align*}
\mathds{P}(|Y| \leq R) &= \frac{n \kappa_n }{n+1} \int_{0}^{\infty} \frac{\Gamma_{\ell}(n + 1, \gamma f_n(t) R)}{ f_n(t)^{n+1}} \int_{(\sS^{n-1})^{n+1}}  \triangle_n 1_P \prod_{i=0}^n 1_{\{\langle e_n, u_i \rangle \leq t\}} \sigma_{n-1}(\dint u_i) \dint t .
\end{align*}
The lower incomplete gamma function is not decreasing in $t$ like the upper incomplete gamma function, so one cannot proceed exactly as above. Instead we first use the upper bound 
\[\int_{(\sS^{n-1})^{n+1}} \triangle_n 1_P \prod_{i=0}^n 1_{\{\langle e_n, u_i \rangle \leq t\}} \sigma_{n-1}(\dint u_i) \leq \int_{(\sS^{n-1})^{n+1}} \triangle_n 1_P \prod_{i=0}^n \sigma_{n-1}(\dint u_i).\]
Then, by the fact that
\[\frac{n2^n \omega_n^{n-1}}{(n+1)\kappa_{n-1}^n} \triangle(u_0, \ldots, u_n)1_P(u_0, \ldots, u_n) \prod_{i=0}^n \sigma_{n-1}(\dint u_i)\]
is a joint density (see equation (11) in \cite{Calka_typ1}), we have
\[\int_{(\sS^{n-1})^{n+1}} \triangle_n(u_0, \ldots, u_n) 1_P(u_0, \ldots, u_n) \prod_{i=0}^n \sigma_{n-1}(\dint u_i) =  \frac{\kappa_n(n+1)}{2^n} \left(\frac{\kappa_{n-1}}{n \kappa_n}\right)^n,\]
where we have used the fact that $\omega_n = n \kappa_n$. This gives 
\begin{align*}
\mathds{P}(|Y| \leq R) \leq \frac{n \kappa^2_n }{2^n}\left(\frac{\kappa_{n-1}}{n \kappa_n}\right)^n\int_{0}^{\infty} \frac{\Gamma_{\ell}(n + 1, \gamma f_n(t) R)}{ f_n(t)^{n+1}}  \dint t .
\end{align*}

Now, note that for $t \geq 1$, $f_n(t) = 2t$, so 
\begin{align*}
\int_1^{\infty} \frac{\Gamma_{\ell}(n + 1, \gamma  f_n(t) R)}{f_n(t)^{n+1}}  \dint t \leq \frac{1}{2^{n+1}}\int_1^{\infty} \frac{\Gamma(n+1)}{t^{n+1}} \dint t  = \frac{\Gamma(n)}{2^{n+1}}.
\end{align*}
Thus, 
\begin{align*}
\mathds{P}(|Y| \leq R) \leq\frac{n \kappa^2_n }{2^n}\left(\frac{\kappa_{n-1}}{n \kappa_n}\right)^n \left[ \int_{0}^{1} \frac{\Gamma_{\ell}(n + 1, \gamma f_n(t) R)}{ f_n(t)^{n+1}}  \dint t + \frac{\Gamma(n)}{2^{n+1}} \right] .
\end{align*}
Next we show that the function
\[h_n(t) :=  \frac{\Gamma_{\ell}(n + 1, \gamma f_n(t) R)}{f_n(t)^{n+1}}\]
is decreasing and thus reaches its maximum at $t = 0$.  
It suffices to show $h_n'(t) \leq 0$. Indeed, we first observe that the derivative of $f_n'(t)$,
\begin{align*}
f_n'(t) = \begin{cases} 1 + I_{t^2}\left(\frac{1}{2}, \frac{n-1}{2}\right), & 0 \leq t \leq 1 \\ 2, & t \geq 1, \end{cases}
\end{align*}
is positive. 
Then, by the Fundamental Theorem of Calculus,
\begin{align*}
\frac{\dint}{\dint t}\Gamma_{\ell}(n + 1,  \gamma f_n(t) R) = e^{- \gamma f_n(t) R}\left(\gamma f_n(t) R\right)^n \left(\gamma R \right)f_n'(t),
\end{align*}
and by the quotient rule,
\begin{align*}
h_n'(t) &= \frac{1}{f_n(t)^{2n + 2}} \left( f_n(t)^{n+1}\frac{\dint}{\dint t}\Gamma_{\ell}(n + 1,  \gamma f_n(t) R)  - (n+1)f_n(t)^n f_n'(t)\Gamma_{\ell}(n + 1,  \gamma f_n(t) R)  \right) \\
&=  -  \frac{f_n'(t)}{f_n(t)^{n+2}} \left( (n+1) \Gamma_{\ell}(n + 1,  \gamma  f_n(t)R)- e^{- \gamma f_n(t) R}\left(\gamma f_n(t) R\right)^{n+1}  \right). 
\end{align*}
Since $f_n$ and $f_n'$ are positive, it suffices to show the following inequality for $h'$ to be negative:
\begin{align}\label{e:ineq1}
 e^{- \gamma f_n(t) R}\left(\gamma f_n(t) R\right)^{n+1} \leq (n +1)\Gamma_{\ell}(n+1,  \gamma f_n(t) R)  .
\end{align}
Indeed, since $e^{-t} \geq e^{-x}$ for all $t \in [0,x]$,
\begin{align*}
\Gamma_{\ell}(n+1, x) &= \int_0^x e^{-t}t^n \dint t  \geq e^{-x} \int_0^x t^{n} \dint t = e^{-x}\frac{x^{n+1}}{n+1}.
\end{align*}
Letting $x = 2\gamma f(t) R$ gives \eqref{e:ineq1}, and hence $h'(t) \leq 0$.
Thus, by \eqref{e:f_zero},
\begin{align*}
\mathds{P}(|Y| \leq R) \leq   \frac{n \kappa_n^2 }{4^{n}}\left(\frac{n \kappa_n}{2\kappa_{n-1}}\right) \left[\Gamma_{\ell}\left(n + 1, 2\gamma R \frac{\kappa_{n-1}}{n \kappa_n}\right) + \Gamma(n)\left(\frac{\kappa_{n-1}}{n\kappa_n}\right)^{n+1} \right].
\end{align*}

\end{proof}

We now prove the main results. 
\subsection{Proof of Proposition \ref{p:hyp_moments}}

By Lemma \ref{l:bound},
\begin{align*}
\mathds{E}[|Y|^k] &= k \int_0^{\infty} y^{k-1} \mathds{P}(|Y| \geq y) \dint y  \\
&\leq k \int_0^{\infty} y^{k-1} \frac{\Gamma_u(n+1, 2\gamma \frac{\kappa_{n-1}}{n \kappa_n} y)}{\Gamma(n+1)} \dint y \\
& = \frac{k}{\Gamma(n+1)} \int_0^{\infty} y^{k-1} \left(\int_{2\gamma \frac{\kappa_{n-1}}{n \kappa_n} y}^{\infty} t^n e^{-t}\dint t \right)\dint y.
\end{align*}
Then, by Fubini's Theorem,
\begin{align*}
\mathds{E}[|Y|^k] &\leq \frac{k}{\Gamma(n+1)} \int_0^{\infty} t^n e^{-t} \left(\int_0^{\frac{t}{2\gamma}\frac{n\kappa_n}{\kappa_{n-1}}} y^{k-1} \dint y \right)\dint t \\
&= \frac{1}{\Gamma(n+1)} \int_0^{\infty} t^n e^{-t} \left(\frac{n \kappa_n}{2\gamma \kappa_{n-1}} t\right)^k \dint t = \frac{\Gamma(n + k + 1)}{\Gamma(n+1)}  \left(\frac{n \kappa_n}{2\gamma \kappa_{n-1}} \right)^k.
\end{align*}
Applying \eqref{e: cell_int} gives the final conclusion:
\[ \mathds{E}[|Y|^k] \leq \frac{\Gamma(n + k + 1)}{\Gamma(n+1)2^k} \left(\frac{\kappa_n}{\lambda}\right)^{k/n}.\]


\subsection{Proof of Theorem \ref{t:hyp_thresh}}


To prove the results, we will compute asymptotic approximations on the bounds from Lemma \ref{l:bound}. These bounds depend on the intensity of the hyperplane process associated to the random hyperplane mosaic, so we first observe the relationship of this intensity to that of the cell intensity in this setting. 

Let $\gamma_n$ denote the intensity of the Poisson hyperplane process $\hat{X}_n$ that corresponds to the Poisson hyperplane mosaic $X_n$ with cell intensity $e^{n \rho_n}$. By \eqref{e: cell_int} and \eqref{e:vball}, $\gamma_n$ satisfies
\begin{align}\label{e:gamma_asymptotic}
\gamma_n = \frac{n \kappa_n}{\kappa_{n-1}} \left(\frac{e^{n \rho_n}}{\kappa_n}\right)^{\frac{1}{n}} \sim \frac{e^{\rho}}{\sqrt{e}} n \, \text{ as } \, n \to \infty.
\end{align}
Define the constants $c_n := \gamma_n \sqrt{n}R \frac{2\kappa_{n-1}}{n^2 \kappa_n}$ and the limit
\[c := \lim_{n \to \infty} c_n = \lim_{n \to \infty} \gamma_n \sqrt{n}R \frac{2\kappa_{n-1}}{n^2 \kappa_n} =\lim_{n \to \infty} \frac{e^{\rho}}{\sqrt{e}} n \sqrt{n} R \frac{2}{n\sqrt{ 2 \pi n}} = e^{\rho} R\sqrt{\frac{2}{\pi e}},\]
where the second equality follows from \eqref{e:vball} and \eqref{e:gamma_asymptotic}. Then, by a modified application of Laplace's method (see \ref{A:gamma}), for $c > 1$,
\begin{equation}\label{e:gamu_asymp}
\Gamma_u(n +1 , 2\gamma_n \sqrt{n}R \frac{\kappa_{n-1}}{n\kappa_n}  ) = 
\Gamma_u(n + 1 , c_n n)  \sim n^n \frac{c e^{n(\ln c_n - c_n)}}{(c - 1)},
\end{equation}
and for $c  < 1$,
\begin{equation}\label{e:gaml_asymp}
\Gamma_{\ell}(n + 1, 2\gamma_n \sqrt{n}R \frac{\kappa_{n-1}}{n\kappa_n}  ) = \Gamma_{\ell}(n + 1 , c_n n)  \sim n^n \frac{c e^{n(\ln c_n -  c_n)}}{(1- c)}. 
\end{equation}

First assume $R > e^{-\rho} \sqrt{ \pi e /2}$, which is equivalent to $c > 1$. By Lemma \ref{l:bound}, \eqref{e:gamu_asymp}, and Stirling's formula, as $n \rightarrow \infty$,
\begin{align*}
\mathds{P}(|Y_n| \geq \sqrt{n}R) &\leq \frac{\Gamma_u\left(n + 1, c_n n \right)}{\Gamma(n+1)} 
 \sim \frac{1}{\sqrt{2\pi n}}\left(\frac{e}{n}\right)^n n^n \frac{c e^{n(\ln c_n - c_n)}}{(c - 1)} =  \frac{c e^{n(\ln c_n - c_n + 1) }}{\sqrt{2\pi n}(c - 1)},
\end{align*}
and thus
\begin{align*}
\limsup_{n \rightarrow \infty} \frac{1}{n} \ln \mathds{P}(|Y_n| \geq \sqrt{n}R) &\leq \limsup_{n \to \infty} (\ln c_n - c_n + 1) = \ln c - c + 1\\
&= \rho + \frac{1}{2}\ln\left(\frac{2 e}{\pi }\right) + \ln R  -   e^{\rho} R\sqrt{\frac{2}{ \pi e}}  .
\end{align*}
Now assume that $R  < e^{-\rho} \sqrt{\pi e /2}$, which is equivalent to $c < 1$. By Lemma \ref{l:bound},
\begin{align*}
\mathds{P}(|Y_n| \leq \sqrt{n}R)&\leq\frac{n \kappa^2_n }{4^n}\left(\frac{n\kappa_n}{2\kappa_{n-1}}\right) \Gamma_{\ell}\left(n + 1, c_n n \right)\left( 1 +  \frac{\Gamma(n)\left(\frac{\kappa_{n-1}}{n\kappa_n}\right)^{n+1} }{\Gamma_{\ell}\left(n + 1, c_n n \right)}\right).
\end{align*}
Note that by \eqref{e:gaml_asymp} and Stirling's formula, 
\begin{align*}
\frac{\Gamma_{\ell}\left(n + 1, c_n n \right)}{\Gamma(n+1)} 
 \sim  \frac{c e^{n(\ln c_n - c_n + 1) }}{\sqrt{2\pi n}(c - 1)}, \,  \text{ as } n \rightarrow \infty.
\end{align*}
Thus by \eqref{e:vball},
\begin{align*}
\lim_{n \rightarrow \infty} \frac{\Gamma(n)\left(\frac{\kappa_{n-1}}{n\kappa_n}\right)^{n+1} }{\Gamma_{\ell}\left(n + 1, c_n n \right)} &= \lim_{n \rightarrow \infty} \frac{1}{n}\left(\frac{\kappa_{n-1}}{n\kappa_n}\right)^{n+1} \frac{\Gamma(n+1)}{\Gamma_{\ell}\left(n + 1, c_n n \right)} = 0.
\end{align*}
Also, again using  \eqref{e:vball} and \eqref{e:gaml_asymp}, 
\begin{align*}
\frac{n \kappa^2_n }{4^n}\left(\frac{n\kappa_n}{2\kappa_{n-1}}\right) \Gamma_{\ell}\left(n + 1, c_n n \right) \sim  \left(\frac{\pi}{2}\right)^n\left(\frac{n}{2\pi}\right)^{\frac{1}{2}} \frac{c e^{n(\ln c_n - c_n + 1) }}{(1-c)}, \,  \text{ as } n \rightarrow \infty.
\end{align*}
Putting these observations together gives
\begin{align*}
\limsup_{n \rightarrow \infty} \frac{1}{n} \ln \mathds{P}(|Y_n| \leq \sqrt{n}R) \leq \rho + \frac{1}{2}\ln\left(\frac{\pi e}{2}\right) + \ln R  -   e^{\rho} R\sqrt{\frac{2}{\pi e}}.
\end{align*}


\appendix


\section{Laplace Method}


\begin{lemma}\label{A:laplace}
Let $a, b \in \R$ such that $a <b$ and let $\{a_n\}_{n \in \mathds{N}}$ be a sequence of real numbers such that $\lim_{n \to \infty} a_n = a$. Consider a function $f(t)$ that attains its minimum at $t = a$ on the interval $[a , b)$ and $f'(t)$ is continuous. If $f'(a) > 0$, then as $n \rightarrow \infty$,
\begin{align*}
\int_{a_n}^b e^{-n f(t)} \dint t \sim  \frac{e^{-n f(a_n)}}{n f'(a)}.
\end{align*}
Now suppose $f(t)$ attains its minimum at $t = b$ over the interval $(a,b]$ and $\{b_n\}_{n \in \mathds{N}}$ is a sequence of real numbers such that $\lim_{n \to \infty} b_n = b$. If $f'(b) < 0$, then as $n \to \infty$,
\begin{align*}
\int_a^{b_n} e^{-n f(t)} \dint t \sim  - \frac{e^{-n f(b_n)}}{n f'(b)}.
\end{align*}
\end{lemma}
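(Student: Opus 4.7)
The plan is to prove the first statement (the second follows by the substitution $t \mapsto a+b-t$) by the standard Laplace endpoint recipe, with the small wrinkle that the lower limit $a_n$ moves with $n$. Since $f'$ is continuous and $f'(a)>0$, there exists $\delta>0$ such that $f'(t) \geq \tfrac{1}{2} f'(a)$ on $[a, a+2\delta]\cap [a,b)$. In particular $f$ is strictly increasing there, and for $n$ large enough we have $a_n \in [a, a+\delta)$.

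The first step is to split
\begin{equation*}
\int_{a_n}^b e^{-nf(t)} \dint t \;=\; \int_{a_n}^{a+\delta} e^{-nf(t)} \dint t \;+\; \int_{a+\delta}^{b} e^{-nf(t)} \dint t,
\end{equation*}
and discard the tail. On $[a+\delta, b)$ we have $f(t) \geq f(a+\delta) \geq f(a)+\tfrac{1}{2}f'(a)\delta$, so the tail is bounded by $(b-a-\delta)\exp(-n(f(a)+\tfrac{1}{2}f'(a)\delta))$, which is exponentially smaller than the target $e^{-nf(a_n)}/(nf'(a))$ because $f(a_n) \to f(a)$.

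On the main piece $[a_n,a+\delta]$ I will substitute $u = n(f(t)-f(a_n))$, so $\dint u = nf'(t)\dint t$. Since $f'$ is strictly positive on this interval, this is a valid change of variables, and
\begin{equation*}
\int_{a_n}^{a+\delta} e^{-nf(t)} \dint t \;=\; e^{-nf(a_n)} \int_{0}^{n(f(a+\delta)-f(a_n))} \frac{e^{-u}}{n\, f'(t(u))} \dint u.
\end{equation*}
The upper limit tends to $+\infty$, while $f'(t(u)) \to f'(a_n) \to f'(a)$ as $n\to\infty$ on the bulk of the integration region (pointwise in $u$, and with a uniform lower bound $\tfrac{1}{2}f'(a)$ allowing dominated convergence against $e^{-u}$). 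Therefore the inner integral is asymptotic to $\int_0^\infty e^{-u}\dint u / f'(a) = 1/f'(a)$, which yields the claim.

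The main obstacle is a cosmetic one: one must be sure that the asymptotic $1/f'(a)$ rather than $1/f'(a_n)$ is the right constant (they are of course equivalent by continuity of $f'$), and that the movement of $a_n$ does not contribute a spurious factor, which is precisely why I carry the $e^{-nf(a_n)}$ outside rather than $e^{-nf(a)}$. Everything else is routine dominated convergence on $[0,\infty)$ once the tail has been controlled.
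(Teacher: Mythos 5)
Your main argument---substituting $u = n(f(t) - f(a_n))$ and then applying dominated convergence against $e^{-u}$---is a clean and correct alternative to the paper's proof, which instead works directly with first-order Taylor expansions of $f$ about $a_n$ and explicit $\varepsilon$-tolerances on $f'$ to sandwich the integral between matching upper and lower bounds. Your route buys brevity: once the change of variables is legal (which requires $f'$ strictly positive near $a$, exactly what you arrange with $\delta$), the convergence of the inner integral to $1/f'(a)$ is a one-line dominated convergence application with dominating function $2e^{-u}/f'(a)$. The paper's route avoids inverting $f$ and is more explicit about the error terms, at the cost of being longer.

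There is, however, one gap in your tail estimate. You assert that $f(t) \geq f(a+\delta)$ for $t \in [a+\delta, b)$, but this monotonicity does not follow from the hypotheses: $f$ is only assumed to attain its minimum at $a$, so it could drop back toward $f(a)$ (while staying strictly above it) somewhere in $(a+\delta,b)$ and undercut $f(a+\delta)$. What does hold is the weaker bound $C := \inf_{t \in [a+\delta, b]} f(t) > f(a)$, by continuity of $f$ on the compact interval $[a+\delta,b]$ together with $f>f(a)$ there (the implicit ``$a$ is the unique minimizer'' reading, which the paper's own proof also relies on when it asserts $\inf_{[a+\delta/2,b]} f > f(a)$). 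That suffices: the tail is at most $(b-a-\delta)e^{-nC}$, which is still exponentially negligible against $e^{-nf(a_n)}/n$ because $C>f(a)$ and $f(a_n)\to f(a)$. With this correction, the rest of your argument goes through unchanged.
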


\begin{proof}
To prove the first statement, let $\varepsilon > 0$ such that $f'(a) - \varepsilon > 0$. By the continuity of $f'$, there exists $\delta > 0$ such that $|t - a| < 2\delta$ implies $f'(t) \leq f'(a) + \varepsilon$. In addition, $\delta$ can be made small enough so that $\delta < \frac{b-a}{2}$. Then, there exists $M$ such that for all $n \geq M$, $\delta < b-a_n$ and $|a - a_n| < \delta$. Fix $n \geq M$. By Taylor's theorem, for each $t \in [a_n,b]$, there is some $\xi_t \in (a_n, t)$ such that 
\[ f(t) = f(a_n) + f'(\xi_t)(t - a_n).\]
Then, for all $t$ such that $|t - a_n| < \delta$, we have that $|\xi_t - a_n| < \delta$, and the triangle inequality gives $|\xi_t - a|  <  2\delta$. This implies 
\[ f(t) \leq f(a_n) + (f'(a) + \varepsilon)(t - a_n).\]
Since the integrand is positive, 
\begin{align*}
\int_{a_n}^b e^{-n f(t)} \dint t &\geq \int_{a_n}^{a_n + \delta} e^{-n f(t)} \dint t \geq \int_{a_n}^{a_n + \delta} e^{-n(f(a_n) + (f'(a) + \varepsilon)(t - a_n))} \dint t  \\
&= \frac{e^{-n f(a_n)}}{n(f'(a) + \varepsilon)} \int_0^{\delta n (f'(a) + \varepsilon)} e^{- y} \dint y = \frac{e^{-n f(a_n)}}{n(f'(a) + \varepsilon)} \left(1 - e^{- \delta n (f'(a) + \varepsilon)}\right).
\end{align*}
Then, since this inequality holds for all $n \geq M$,
\begin{align*}
\liminf_{n \rightarrow \infty} \frac{\int_{a_n}^b e^{-n f(t)} \dint t }{\frac{e^{-n f(a_n)}}{n(f'(a) + \varepsilon)} } \geq \liminf_{n \rightarrow \infty} \left(1 - e^{- \delta n (f'(a) + \varepsilon)}\right) = 1.
\end{align*}

For the upper bound, a similar Taylor series argument gives that there exists $\delta > 0$ such that for all $n$ large enough and $t$ such that $|t - a_n| < \delta$,
\[f(t) \geq f(a_n) + (f'(a) - \varepsilon)(t - a_n).\]
Then, define
\[ C := \inf_{t \in [a + \frac{\delta}{2}, b]} f(t) > f(a),\]
where the lower bound follows from the assumption that $f$ achieves its minimum at $a$ on the interval $[a,b)$, and define $\eta := C - f(a) > 0$. Note that for all $t \in [a + \frac{\delta}{2}, b]$, $f(t) > f(a) + \eta$. Then, for all $n$ large enough,
\begin{align*}
\int_{a_n}^b e^{-n f(t)} \dint t &= \int_{a_n}^{a_n + \delta} e^{-n f(t)} \dint t  +  \int_{a_n + \delta}^b e^{-n f(t)} \dint t  \\
&\leq \int_{a_n}^{a_n + \delta} e^{-n (f(a_n) + (f'(a) - \varepsilon)(t - a_n)} \dint t  +  \int_{a + \frac{\delta}{2}}^b e^{-n C} \dint t   \\
& < (b - a) e^{-n C}  + \frac{e^{-nf(a_n)}}{n(f'(a) - \varepsilon)} \int_0^{\delta n (f'(a) - \varepsilon)} e^{-y} \dint y \\
&=  (b - a) e^{-n C}  + \frac{e^{-nf(a_n)}}{n(f'(a) - \varepsilon)} \left( 1 - e^{- \delta n (f'(a) - \varepsilon)} \right),
\end{align*}
where the first upper bound uses that fact that for all $n$ large enough, $b > a_n + \delta > a + \frac{\delta}{2}$. This upper bound implies
\begin{align*}
\limsup_{n \rightarrow \infty} \frac{\int_{a_n}^b e^{-n f(t)} \dint t }{\frac{e^{-n f(a)}}{n(f'(a) - \varepsilon)} } \leq \limsup_{n \rightarrow \infty} \{(b-a)n(f'(a) - \varepsilon)e^{-n\eta} + 1 - e^{- \delta n (f'(a) - \varepsilon)} \} = 1,
\end{align*}
since $\eta > 0$ and $f'(a) - \varepsilon > 0$. These limits hold for all $\varepsilon$ small enough, and thus,
\begin{align*}
\lim_{n \rightarrow \infty} \frac{\int_{a_n}^b e^{-n f(t)} \dint t }{\frac{e^{-n f(a_n)}}{nf'(a)} } = 1.
\end{align*}
The second statement of the lemma is proved similarly.

\end{proof}

\begin{lemma}\label{A:gamma} 
Let $\{c_n\}_{n \in \mathds{N}}$ be a sequence of positive real numbers and assume that $\lim_{n \to \infty} c_n = c \in (0, \infty)$. Then, as $n \rightarrow \infty$, if $c > 1$,
\[ \Gamma_u(n+1, c_n n) \sim n^n \frac{ce^{n( \ln c_n- c_n)}}{(c-1)},\]
and if $c < 1$,
\[ \Gamma_{\ell}(n+1, c_n n) \sim n^n \frac{ce^{n(\ln c_n - c_n )}}{(1-c)}.\]
\end{lemma}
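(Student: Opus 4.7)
The plan is to reduce each statement to a direct application of Lemma~\ref{A:laplace} after the rescaling $t = ns$, which transfers the large parameter $n$ into the exponent of the integrand. The substitution yields
\[ \Gamma_u(n+1, c_n n) = n^{n+1}\int_{c_n}^{\infty} e^{-n f(s)}\,\dint s, \qquad \Gamma_{\ell}(n+1, c_n n) = n^{n+1}\int_{0}^{c_n} e^{-n f(s)}\,\dint s,\]
where $f(s) := s - \ln s$. Since $f'(s) = 1 - 1/s$, the function $f$ is strictly decreasing on $(0,1)$, attains its global minimum at $s=1$, and is strictly increasing on $(1,\infty)$; moreover $f(c_n) = c_n - \ln c_n$ already matches the exponent claimed in the target asymptotics.

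For $c > 1$, one has $f'(c) = 1 - 1/c > 0$, so the integrand on $[c_n,\infty)$ is maximized at the moving left endpoint $c_n$ for $n$ large. Fixing any $b > c$ and applying the first statement of Lemma~\ref{A:laplace} with $a = c$ and $a_n = c_n$ yields
\[ \int_{c_n}^{b} e^{-nf(s)}\,\dint s \;\sim\; \frac{e^{-nf(c_n)}}{n\,f'(c)} \;=\; \frac{c\,e^{n(\ln c_n - c_n)}}{n(c-1)}. \]
The tail $\int_b^{\infty} e^{-nf(s)}\,\dint s$ is controlled via the convexity inequality $f(s) \geq f(b) + f'(b)(s-b)$ for $s \geq b$, which gives an upper bound of order $e^{-nf(b)}/n$; this is exponentially smaller than the main term because $f(b) > f(c) = \lim_n f(c_n)$. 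Multiplying through by $n^{n+1}$ produces the first asymptotic.

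For $c < 1$, one has $f'(c) < 0$ and the analogous argument uses the second statement of Lemma~\ref{A:laplace}. Fixing any $a \in (0, c)$ and splitting at $a$, the main piece $\int_a^{c_n} e^{-nf(s)}\,\dint s$ is asymptotically $-e^{-nf(c_n)}/(n f'(c)) = c\,e^{n(\ln c_n - c_n)}/(n(1-c))$, while the head $\int_0^a e^{-nf(s)}\,\dint s$ is dominated by $a\,e^{-nf(a)}$ (since $f$ is decreasing on $(0,1)$, so $e^{-nf}$ is maximized at $s = a$ on this sub-interval), again exponentially smaller than the main term because $f(a) > f(c)$. Multiplying by $n^{n+1}$ yields the second asymptotic.

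The only technical obstacle is that Lemma~\ref{A:laplace} is stated on a bounded interval with a continuous phase, whereas the target integrals run out to $\infty$ in the upper case and begin at $0$ (where $f$ has a logarithmic singularity) in the lower case. Both issues are handled by the crude truncation estimates above, which exploit the convexity of $f$ on $(1,\infty)$ and its monotonicity on $(0,1)$: the strict gaps $f(b) - f(c) > 0$ and $f(a) - f(c) > 0$ ensure that the discarded pieces are exponentially smaller than the main Laplace contribution, so no finer analysis of the tails is required.
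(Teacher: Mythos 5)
Your proof is correct and uses the same core approach as the paper: rescale $t = ns$ so that $\Gamma_u(n+1,c_n n) = n^{n+1}\int_{c_n}^\infty e^{-nf(s)}\,\dint s$ (and similarly for $\Gamma_\ell$), identify $f(s)=s-\ln s$ with the moving endpoint $c_n\to c$ as the location of the minimum, and invoke Lemma~\ref{A:laplace} with $f'(c)=1-1/c$. The one place where you go beyond the paper's own write-up is the explicit truncation: the paper applies Lemma~\ref{A:laplace} directly to $\int_{c_n}^\infty$ (respectively $\int_0^{c_n}$), even though that lemma is stated for a bounded interval $[a_n,b]$ with $b\in\R$ (and with $f'$ continuous up to the endpoint), whereas you first cut at a fixed $b>c$ (resp.\ $a\in(0,c)$), apply the lemma on the bounded piece, and dispose of the remainder by the convexity bound $f(s)\ge f(b)+f'(b)(s-b)$ (resp.\ monotonicity of $f$ on $(0,1)$), using the strict gaps $f(b)>f(c)$ and $f(a)>f(c)$ to see the discarded piece is exponentially negligible. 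This patches a small imprecision in the paper's argument rather than constituting a genuinely different route.
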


\begin{proof}
By a change of variables,
\[ \Gamma_u(n+1, c_n n) = \int_{c_nn}^{\infty} e^{-t}t^{n} \dint t =n^{n+1} \int_{c_n}^{\infty} e^{-n(y - \ln y)} \dint y.\]
For $c > 1$, the function $f(y) = y - \ln(y)$ has a minimum at $c$ on the interval $[c, \infty)$. Also observe that $f'(y) = 1 - \frac{1}{y}$, and for $c > 1$, $f'(c) = 1 - \frac{1}{c} > 0$. Thus, by Lemma \ref{A:laplace},
\begin{align*}
\Gamma_u(n+1, c_n n)  \sim n^{n+1} \frac{e^{-n(c_n - \ln c_n)}}{n\left(1 - \frac{1}{c}\right)} = n^{n} \frac{c e^{n(\ln c_n - c_n)}}{(c- 1)}.
\end{align*}
Similarly, 
\[\Gamma_{\ell}(n+1, c_n n) = \int_0^{c_nn} e^{-t}t^n \dint t =n^{n+1} \int_0^{c_n} e^{-n(y - \ln y)} \dint y,\]
and for $c < 1$, $f(y) = y - \ln(y)$ hits its minimum at $c$ on the interval $(0, c]$ and $f'(c) = 1 - \frac{1}{c} < 0$. Thus, again by Lemma \ref{A:laplace},
\[ \Gamma_{\ell}(n+1, c_n n) \sim n^n \frac{c e^{n(\ln c_n - c_n)}}{(1- c)}.\]
\end{proof}

\bibliographystyle{abbrv}
\bibliography{Ref_zerocell}

\end{document}